\documentclass[twocolumn]{autart}

\usepackage{amsmath}
\usepackage{amssymb}
\usepackage{algorithm}
\usepackage{algorithmic}
\usepackage{graphicx} % Required for inserting images
\usepackage{natbib}
\usepackage{xcolor}

\usepackage[algo2e,ruled,vlined,linesnumbered,norelsize]{algorithm2e}
\DeclareMathOperator*{\argmin}{argmin}
\def\X{\mathbb{X}}
\def\U{\mathbb{U}}
\def\V{\mathbb{V}}
\def\W{\mathbb{W}}
\def\Z{\mathbb{Z}}
\def\D{\mathbb{D}}
\def\R{\mathbb{R}}

\def\I{\mathcal{I}}
\def\S{\mathcal{S}}
\def\Vset{\mathcal{V}}
\def\False{\textnormal{False}}
\def\True{\textnormal{True}}
\def\feasible{\mathit{feasible}}

\allowdisplaybreaks[4]
\usepackage{parskip}

\begin{document}

\begin{frontmatter}
\title{Robust Multi-step Model Predictive Control with Guaranteed Stability\thanksref{t1}}
\thanks[t1]{This work was supported by the EPSRC Doctoral Training Partnership EP/W524311/1, project reference 2743399.}
\author[Oxford]{Sebastian Steffen},
\author[Oxford]{Mark Cannon}
% \thanks[ead1]{Email: sebastian.steffen@eng.ox.ac.uk.}
% \thanks[ead2]{Email: mark.cannon@eng.ox.ac.uk.}
\address[Oxford]{Department of Engineering Science, University of Oxford, UK.\\Email: sebastian.steffen@eng.ox.ac.uk, mark.cannon@eng.ox.ac.uk}

%\thanks[fundinginfo]{This work was supported by the EPSRC Doctoral Training Partnership EP/W524311/1, project reference 2743399.}

\begin{abstract}
We present a method of ensuring recursive feasibility and input-to-state stability of robust nonlinear Model Predictive Control (MPC) with multi-step predictors.
Although feasibility guarantees are well-established for the case of single-step models applied recursively over a finite horizon, such guarantees are missing in naive MPC formulations that use distinct multi-step models to predict the system state at different future points in time. This issue arises because of potential inconsistencies in multi-step predictions generated at different times.
%
%Although feasibility guarantees are well-established for the case of single-step models that are applied recursively over a finite horizon, such guarantees are missing for multi-step predictors that use distinct models to predict the system state at different future points in time. This issue arises because of potential inconsistencies in multi-step predictions generated at different times.
%
Our approach performs an \textit{a priori} sufficient feasibility check of the robust nonlinear MPC optimisation problem, and uses information from previous solutions to provide a fallback based on previously certified prediction sets.
We illustrate the proposed predictor-substitution strategy with a simple numerical example.%
%, given earlier information on the model state. 
%We present an algorithm using a predictor-substitution strategy to ensure feasibility recursively.% 
\end{abstract}
\end{frontmatter}
% \begin{abstract}
% We present a mechanism for ensuring recursive feasibility and stability of nonlinear tube MPC using multi-step predictors in a standard receding horizon strategy, without the need for multi-rate formulations. Methods to guarantee these properties are well established for standard tube MPC, where state predictions over a finite horizon are made using recursive applications of a one-step-ahead plant model. However, the multi-step case lacks these guarantees due to the fact that distinct functions are used to predict the same future tube cross-sections at different solution steps, causing potential inconsistencies in the predicted tubes under the same input sequences. 
% %
% Our approach checks for potential infeasibility of the nonlinear tube MPC problem \textit{a priori}, and uses information from previous solutions to fall back on previously verified sets, given earlier state information. We present an algorithm using a predictor-substitution strategy to guarantee construction of a feasible problem. 
% \end{abstract}

\section{Introduction}
Model Predictive Control (MPC) is a widely used strategy for control problems involving state and input constraints, external disturbances and model uncertainty~\citep{mayne:2024,kouvaritakis:2016}. At each discrete time step, a constrained receding horizon optimal control problem (RHOCP) is solved to find the control sequence that minimises performance predicted over a finite horizon using a model. 
%Nonlinear MPC has found applications across a range of industrial processes and engineering systems due to the potential for nonlinear models to accurately predict system evolution~\cite{qin:2000,mayne:2024}. 
%Nonlinear models present additional challenges over linear MPC however, including increased online computational load since the RHOCP is in general a nonconvex nonlinear program (NLP), and the need for more sophisticated tools to ensure robustness by propagating the effects of disturbances and model uncertainty through nonlinear dynamics.
%However, for nonlinear models, the optimal control problem is generally a nonlinear problem (NLP), rather  convex, and therefore there is no guarantee that it can be solved with a fixed amount of time.
%
%A large body of work has investigated various ways to address these issues. A common approach reformulates the optimal control problem as a sequential convex program (SCP), with the NLP typically approximated using local linearisations around candidate trajectories~\cite{diehl:2005}. The convexified problem (often a QP) is then iterated by linearising using trajectories generated using previous solutions \cite{mao:2017, lishkova:2025}. These 
Robust MPC requires the effects of model errors and uncertainty to be propagated across the future prediction horizon, and this can present additional challenges for the case of nonlinear model dynamics~\citep{diehl:2005,mao:2017,lishkova:2025}.
However, the use of distinct functions for each time step of the prediction horizon allows a simpler formulation of the RHOCP, since multi-step predictors can account for model errors and uncertainty at individual future time steps rather than bounding their accumulation over a horizon. 

This is particularly important in the case of models learnt from data.
%
%An additional advantage of multi-step models emerges when they are derived from data rather than analytically. 
Recent comparisons of single-step and multi-step linear models show that when models are misspecified due to partial observability, learnt multi-step models show lower compounding error \citep{somalwar:2025} and better data-efficiency during learning \citep{somalwar:2026}. Similarly, nonlinear multi-step predictors derived from data using deep neural networks can provide lower average prediction error than recursive application of a single-step model~\citep{steffen:2025}.

% \begin{figure}[t]
% \centerline{\includegraphics[width=\linewidth]{figures/visual_abstract_v3.pdf}}
% \caption{Comparison of predictor substitution method for recovering recursive feasibility in multi-step MPC despite inconsistent tube sections across optimisation steps, compared to standard tube MPC.}
% \label{fig:vis_abstract}
% \end{figure}
\begin{figure}[t]
\centerline{\includegraphics[scale=0.5, trim={31mm 29mm 150mm 20mm}, clip]{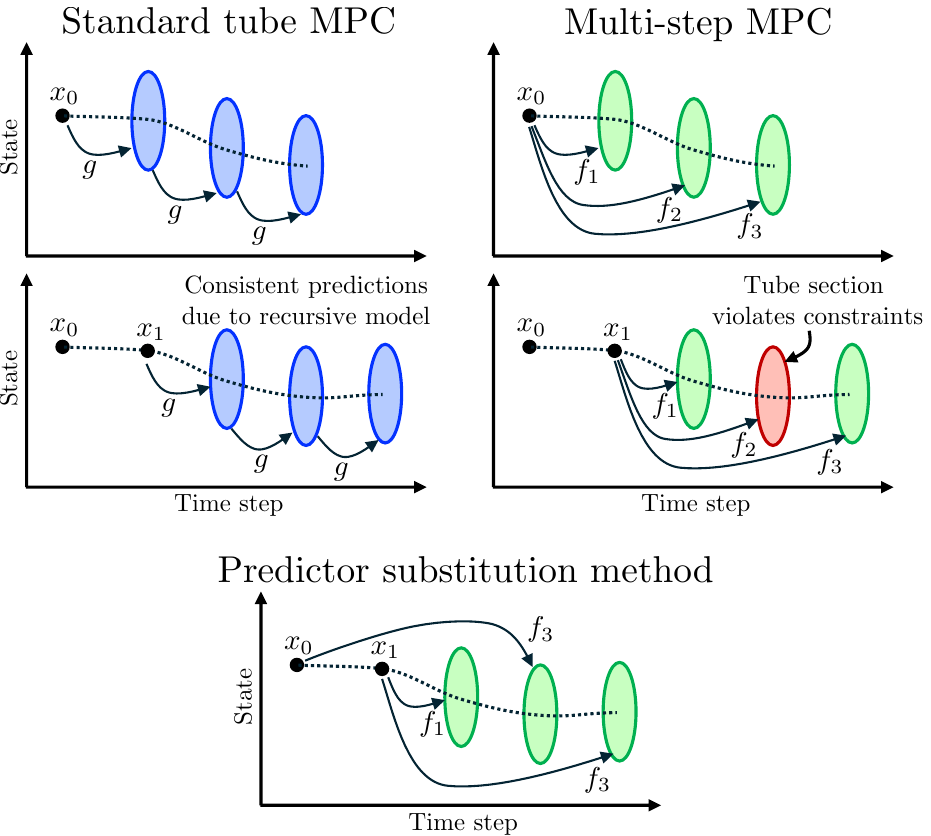}}%
\caption{Predictor substitution method for recovering recursive feasibility in multi-step MPC despite inconsistent tube sections across optimisation steps, and standard tube MPC.}
\label{fig:vis_abstract}
\end{figure}

The use of linear multi-step predictors in MPC is well-established.
%A growing body of literature is emerging on MPC using linear multi-step predictors. 
\citet{terzi:2022} propose a linear multi-rate MPC scheme using an autoregressive model with exogenous input (ARX) structure. 
%The approach features a multi-rate structure in which the prediction horizon is partitioned into blocks corresponding to the multi-rate models, 
The approach partitions the prediction horizon into blocks corresponding to multi-rate models, and in contrast to standard receding-horizon MPC, an entire block of inputs is applied before recomputation. \citet{saccani:2023} extend this multi-rate approach to state-space models with parametric uncertainty and combine it with homothetic tubes to ensure robust constraint satisfaction. Multi-step models are identified from data in~\citet{balim:2024} 
%via an intermediate state space model 
and used in stochastic MPC with chance constraints. Likewise~\citet{fiedler:2023} consider stochastic MPC, but employ a probabilistic multi-step identification method incorporating implicit state estimation. The approach is applied to nonlinear systems by treating the effect of nonlinearity as a disturbance.
%by interpreting nonlinear effects as part of the additive process noise.

A smaller body of work has directly considered the nonlinear case. \citet{maddalena:2021} use kernel-based methods to learn multi-step nonlinear mappings from data and provide deterministic finite-horizon safety guarantees, but do not establish recursive feasibility.
\citet{deJong:2024} establish recursive feasibility using an interpolated initial predictor state in a lifted Koopman space. However, this construction does not directly enforce hard constraints on plant states or outputs.
To the best of our knowledge, recursive feasibility guarantees have not previously been established for robust MPC with additive bounded multi-step prediction errors, hard state constraints, and standard single-step receding horizon updates.
%
% However, neither approach can ensure recursive feasibility when hard constraints on system states or outputs are present, and
% %
% to the best of our knowledge, recursive feasibility guarantees have not previously been established for robust MPC with state and output constraints using multi-step predictors while retaining standard single-step receding horizon control updates.
%
%However, no nonlinear MPC schemes with multi-step predictors proposed to date can ensure recursive feasibility of the RHOCP. 
%Moreover, existing multi-step linear MPC schemes with recursive feasibility guarantees rely on multi-rate formulations, and therefore do not use the standard receding-horizon approach of recomputing the control sequence at each sampling step, given the latest state or output measurement. 

%\subsection*{Contribution and Outline}
In this work we propose a method of ensuring recursive feasibility and input-to-state stability of the RHOCP with multi-step predictors. 
At each time step, the method first constructs a feasible candidate solution
%At each time step, a suboptimal feasible RHOCP solution is first identified 
by searching over a set of candidate trajectories, replacing predictors that violate constraints with previously verified alternatives via a predictor substitution strategy (Fig.~\ref{fig:vis_abstract}). Robust feasibility of a terminal condition is ensured through a variable horizon strategy.
%, and we present a method for recovering the original horizon at later time steps.
Our method applies to nonlinear multi-step prediction models with bounded additive disturbances. 

The paper is organised as follows. After the problem formulation in Section~\ref{sec:problem}, Sections~\ref{sec:normal_mpc} and~\ref{sec:multi_step_feasibility} discuss tube MPC with single-step and multi-step models and present the proposed predictor substitution algorithm. Section~\ref{sec:multi-step_rhocp} describes the RHOCP for multi-step MPC with feasibility and asymptotic performance guarantees.
%is discussed in Section~\ref{sec:stability}. 
A numerical example illustrating the algorithm is discussed in Section \ref{sec:example}, and we provide concluding remarks in Section~\ref{sec:conclusion}.
 
% \subsection*{Notation} \label{ssec:notation}
% Vector quantities are expressed in simple lower-case roman or greek font, e.g. $x$, $\eta$. Matrices are denoted in uppercase font, e.g. $A$, and sets are denoted with blackboard bold font, e.g. $\X$. A subscript denotes the quantity at a particular discrete time step, e.g. $x_{k}$, and subscript with two terms separated by a colon denote a sequence of the quantity from the $a$\textsuperscript{th} to the $b$\textsuperscript{th} steps, e.g.  $v_{a:b} = \{v_{a}, \ \dots, \ v_{b}\}$. $\mathbb{A} \oplus \mathbb{B}$ denotes the Minkowski sum, i.e. $\{a + b : a \in \mathbb{A}, b \in \mathbb{B}\}$. The form $||x||^{2}_{Q}$ denotes the matrix-weighted squared vector norm $x^{T}Qx$, and $|\X|$ denotes the cardinality of a set. 

\section{Problem formulation} \label{sec:problem}
%\subsection{System model, multi-step predictors, and control problem}
Consider a system with state $x\in\X\subset\R^{n_x}$, control input $u\in\U\subset\R^{n_u}$, and unknown disturbance $d\in\D\subset\R^{n_x}$
\begin{equation}\label{eq:plant}
    x_{k+1} = g(x_{k}, u_{k}) + d_{k} ,
\end{equation} 
where $k=0,1,\ldots$ is the discrete time index. We assume that the sets $\X$, $\U$ and $\D$ are compact and $g$ is continuous.

The evolution of the state of \eqref{eq:plant} over a prediction horizon of $N$ steps can be represented using multi-step models as 
\begin{equation} \label{eq:multi-step_dynamics}
x_{i|k} \in \{f_{i}(x_{k}, \, u_{0:i-1|k})\} \oplus \W_i, \quad i = 1, \, \dots, \, N, 
\end{equation} 
where $f_i:\X\times\R^{n_u i}\to\R^{n_x}$,
%where $f_i:\X\times\U^i\to\R^{n_x}$,
%
$\oplus$ denotes Minkowski addition,
$u_{0:i-1|k}=\{u_{0|k},\ldots,u_{i-1|k}\}$, 
with $x_{i|k},u_{i|k}$ denoting the $i$ steps ahead values of $x$ and $u$ predicted at time $k$, 
and $\W_{i}\subset \R^{n_x}$ is a disturbance set accounting for the effects of model errors and disturbances $d_k,\ldots,d_{k+i-1}$ in~\eqref{eq:plant}.
We assume $\W_{1},\ldots,\W_N$ are  known, compact sets that over-bound the multi-step model uncertainty. 
% The evolution of the state of \eqref{eq:plant} over a prediction horizon of $N$ steps can be represented using $N$ multi-step models as 
% % further assume that the model~(\ref{eq:plant}) is approximated for all $k$ using multi-step predictors over a horizon of $N$ steps 
% \begin{equation} \label{eq:multi-step_dynamics}
% x_{i|k} = f_{i}(x_{k}, \, u_{0:i-1|k}) + w_{k}^{(i)}, \quad i = 1, \, \dots, \, N, 
% \end{equation} 
% where $u_{0:i-1|k}=\{u_{0|k},\ldots,u_{i-1|k}\}$, with $x_{i|k},u_{i|k}$ denoting the $i$ steps ahead values of $x$ and $u$ predicted at time $k$,
% and $w^{(i)}_{k} \in \W_{i}$ $i=1,\ldots,N$ are unknown disturbances accounting for model errors and the effects of disturbances $d_k,\ldots,d_{k+i-1}$ in~\eqref{eq:plant}. We assume the multi-step disturbance sets
% %$w_k^{(i)}\in \W_{i}$, where 
% $\W_{1},\ldots,\W_N$ are known but possibly conservative, convex, and compact. 

We write $u$ as a perturbation $v$ on a continuous feedback law $\kappa(x)$%
\begin{equation} \label{eq:control_law}%
   u_{k} = \kappa(x_{k}) +  v_{k} .
\end{equation}
where $v\in\V$ and $\V$ is compact since $(x,u)\in\X\times\U$. 
For each $i$, assume that a closed-loop multi-step predictor $\bar{f}_i:\X\times\R^{n_u i}\to\R^{n_x}$ is known such that, under $u_{j|k} = \kappa(x_{j|k}) + v_{j|k}$ for $j=0,\ldots,i-1$, we have 
%$x_{i|k}\in\X_i(x_k,v_{0:i-1|k})$, where
%Then $f_i$ in \eqref{eq:multi-step_dynamics} can be replaced with
% \begin{equation} \label{eq:fb_transformation}
% x_{i|k} \in \{\bar{f}_{i}(x_{k}, v_{0:i-1|k})\} \oplus \W_i ,
% % \bar{f}_{i}(x_{k}, v_{0:i-1|k}) = f_{i}(x_{k}, u_{0:i-1|k}),
% \end{equation}
%with $v_{0:i-1|k}=\{v_{0|k},\ldots,v_{i-1|k}\}$. A set containing all states predicted by (\ref{eq:fb_transformation}) with initial state $x_k$ is denoted
%and control parameterisation (\ref{eq:control_law}) is denoted
%with perturbation $v_{0:i-1|k}$ is denoted
\begin{equation} \label{eq:tube_section}
x_{i|k} \in \X_i(x_k, v_{0:i-1|k}) = \{\bar{f}_{i}(x_{k}, v_{0:i-1|k})\} \oplus \W_{i} ,
\end{equation}
where $v_{0:i-1|k}=\{v_{0|k},\ldots,v_{i-1|k}\}$ and $\X_0(x_k) = \{ x_{k}\}$.
To simplify notation we 
%sometimes omit the dependence on $(x_k,v_{0:i-1|k})$ and 
write $\X_i(x_k, v_{0:i-1|k})$ as $\X_{i|k}$,
and we denote the set $\{f(x): x\in\X_{i|k}\}$ for any function $f(\cdot)$ as $f(\X_{i|k})$.
%Denote the set $\{\kappa(x) : x\in\X_{i|k}\}$ as $\kappa(\X_{i|k})$.
%
The predicted set sequence forms a tube:
\[
\X_{0:N|k}= \{ \X_{0|k}, \, \dots, \, \X_{N|k}\}.
\]
The following assumption formalises the requirement that the multi-step model in (\ref{eq:tube_section}) robustly bounds the state of the system~\eqref{eq:plant} for all $k$ and  $i\in\{0,\ldots,N\}$.
% We rely on the assumption that the multi-step model \eqref{eq:multi-step_dynamics} is a correct approximation of the the true nonlinear plant \eqref{eq:plant}, i.e. that the predicted sets contain the true value of the system state under the same input trajectory. 

\begin{assum} \label{as:correct_predictor_sets}
For every $k\geq 0$, $x_k\in\X$, $i \in\{1,\ldots,N\}$, $v_{0:i-1|k}\in\V^i$, and $d_k,\ldots,d_{k+i-1}\in\D$, the state $x_{k+i}$ generated by~(\ref{eq:plant}) with $u_{k+j} = \kappa(x_{k+j}) + v_{j|k}$ for $j=0,\ldots,i-1$, satisfies
$x_{k+i} \in \X_{i} (x_{k},v_{0:i-1|k})$.
%
%For any given $v_{k-N},\ldots,v_{k-1}$, and for all disturbances $d_{k-N}\ldots, d_{k-1}\in\D$, the state $x_{k}$ of (\ref{eq:plant}) is consistent with the multi-step predictors~(\ref{eq:tube_section}), i.e., for all $k\geqslant N$ and all $i\in\{1,\ldots,N\}$,
%\begin{equation}
%    x_{k} \in \X_{i} (x_{k-i},v_{k-i:k-1}).
%    \end{equation}
\end{assum}

% \vspace{-\baselineskip}
% \textcolor{blue}{Include a remark here on how the sets $\W_i$ might be determined from data, by computing empirical bounds based on the error in multi-step predictions relative to samples, perhaps using PAC bounds derived from an assumption on the distribution of errors.}

\begin{rem}
Assumption~\ref{as:correct_predictor_sets} requires $\W_i$ to be a deterministic outer bound on the $i$-step prediction error $\smash{e_i}=\smash{x_i} - \smash{\bar{f}_i(z)}$ for all $z=(x_0,v_{0:i-1})\in\Z_i=\X\times\smash{\V^i}$.
Approximate error bounds can be computed using errors evaluated on a finite sample set $\smash{z^{(1)}},\ldots,\smash{z^{(n_s)}}\in\Z_i$ with a level of confidence that depends on $n_s$.
A robust bound $\W_i$ can be computed given knowledge of $g$ and $\D$ in model~(\ref{eq:plant}) by propagating the disturbances through an incremental linear difference inclusion, as discussed in Appendix~\ref{appendix:disturbance_sets}.
%
%using error samples $\smash{e_i^{(1)}},\ldots,\smash{e_i^{(m)}}$ if a Lipschitz bound $L_i$ on the prediction error map in is known. If the samples $\smash{z_i^{(1)}},\ldots,\smash{z_i^{(m)}}$ have covering radius $\rho_i$ in $\Z_i$, then a deterministic outer approximation is $\smash{\W_i} = \mathrm{co}\{\smash{e_i^{(1)},\ldots,e_i^{(m)}}\}\oplus \smash{L_i\rho_i\B}$, where $\B$ is the unit ball in $\R^{n_x}$.
%
%Finite error samples alone do not establish such a bound. One possible construction uses a Lipschitz bound on the  $i$-step prediction error map $\smash{e_i=x_i - \bar{f}_i(x_0,v_{0:i-1})}$ together with a covering radius of the sampled state-control perturbation domain. 
%Alternatively, $\W_i$ may be computed by propagating the known disturbance and model error bounds using a linear difference inclusion as discussed in Appendix A.1.
\end{rem}

% \begin{rem}
% The set $\W_i$ is required to provide deterministic bounds on prediction error $\smash{e_i=x_i - \bar{f}_i(x_0,v_{0:i-1})}$. These bounds can be computed using samples of $e_i$ if Lipschitz bounds on $\bar{f}_i$ are available, or using knowledge of $\D$ in~(\ref{eq:plant}) and a linear difference inclusion bound on $g$ if such is available (e.g.~see the example of Appendix~\ref{appendix:disturbance_sets}). 
% %
% %Alternatively, given an independent calibration sample of prediction errors $e_i$, an order-statistic rule could be used to choose a probabilistic bound $\W_i$ on future prediction errors with a prescribed level of confidence.
% %and they are less likely to be conservative. 
% \end{rem}

% The control objective is to minimise an upper bound over model uncertainty of a quadratic cost 
% \begin{equation}\label{eq:infinite_horizon_cost}
%     \sum^{\infty}_{k=0}(\| 
%     x_{k}\|^{2}_{Q} + \|u_{k}\|^{2}_{R})
% \end{equation}
% where $\|x\|_Q^2$ is the quadratic form $x^\top Q x$, and  $Q,R$ are positive definite weighting matrices, denoted $Q,R \succ 0$.

The control objective is to satisfy the state and input constraints robustly while maintaining a small quadratic stage cost. The RHOCP uses a finite-horizon worst-case cost, and Section~\ref{sec:multi-step_rhocp} establishes an upper bound on the asymptotic time average of the closed-loop stage cost

\section{Tube MPC with single-step predictors} \label{sec:normal_mpc}
To motivate the development for multi-step predictors, we consider a generic robust tube MPC based on a single-step model with an RHOCP employing a worst-case cost
\begin{equation} \label{eq:normal_mpc_cost}
J(v_{0:N-1|k}, \X_{0:N|k}) =  \sum_{i=0}^{N-1}\max_{x\in\X_{i|k}}\ell(x, v_{i|k}) 
+ \max_{x\in\X_{N|k}} \ell_T(x)
%       + \max_{x\in\X_{N|k}} \|x\|^{2}_{P}
\end{equation} 
where the terminal cost is given by $\ell_T(x) = \|x\|_P^2=x^\top P x$ for $P\succ 0$, and $\ell(x,v)$ is a quadratic stage cost given by
\begin{equation}
    \ell(x, v) = \|x\|^{2}_{Q} + \|\kappa(x) + v\|^{2}_{R} 
\end{equation}
for positive definite matrices $Q,R \succ 0$, and where $\X_{0:N|k} = \{\X_{0|k},\ldots,\X_{N|k}\}$ denotes a tube generated by recursive application of the single-step model~(\ref{eq:plant}), with
\begin{equation}\label{eq:normal_mpc_tube}
%\X_{i+1|k} \supseteq g(\X_{i|k}, \kappa(\X_{i|k})+v_{i|k}) \oplus \D
\X_{i+1|k} \supseteq \{g(x, \kappa(x)+v_{i|k})\} \oplus \D
\quad \forall x\in\X_{i|k}, 
\end{equation}
for $i=0,\ldots,N-1$, and $x_k\in \X_{0|k}$.
%(where $g(\X_{i|k},\U_{i|k})=\{g(x,u) : (x,u)\in\X_{i|k}\times\U_{i|k}\}$).
The RHOCP solved at the $k$th discrete time step is given by
%Let $v^*_{0:N-1|k}$ denote the solution of the RHOCP
\begin{subequations}\label{eq:RHOCP}
\begin{equation}
(v^*_{0:N-1|k}, \X^\ast_{0:N|k}) = \argmin_{v_{0:N-1|k}, \X_{0:N|k}}  J(v_{0:N-1|k}, \X_{0:N|k})
\end{equation}
subject to $x_k\in\X_{0|k}$, and, for $i = 0, \dots, N-1$, (\ref{eq:normal_mpc_tube}) and
\begin{align}
\X_{i|k} &\subseteq\X 
%, & & i = 0, \dots, N-1
\label{eq:state_constraint}\\
\kappa(\X_{i|k}) \oplus \{v_{i|k}\} &\subseteq \U 
%, & & i = 0, \dots, N-1
%\kappa(x) + v_{i|k} &\in \U \quad \forall  x\in\X_{i|k}, & & i = 0, \dots, N-1
\label{eq:input_constraint}\\
\X_{N|k} &\subseteq \X_{T} . 
\label{eq:terminal_constraint}
\end{align}    
\end{subequations} 
The control input at the $k$th time step is $u_k=\kappa(x_k) + v^\ast_{0|k}$, and the 
RHOCP is solved for all $k\geqslant 0$. 
%RHOCP is solved again at the next time step. 
%
The terminal set $\X_{T}$ is robust positively invariant (RPI) so that
\begin{equation}\label{eq:single_RPI_condition}
%g\bigl(\X_T, \kappa(\X_T)\bigr) \oplus \D \subseteq \X_{T}\subseteq\X
%\ \text{and} \ 
%\kappa(\X_T) \subseteq \U 
\{g(x, \kappa(x))\} \oplus \D \subseteq \X_{T}\subseteq\X 
\ \text{and} \ 
\kappa(x) \in \U ,
\end{equation}
for all $x\in\smash{\X_T}$. 
We assume that 
that all tube cross-sections use the same set parameterisation, so that a cross-section from a previous solution can be copied into any stage of the tube at the current time step.
%the parameterisation of $\X_{0:N-1|k}$ is consistent with itself, so that it permits $\X_{i|k} = \X_{j|k}$ for any $i,j\in\{0,\ldots,N\}$.
%The parameterisation of $\X_{0:N-1|k}$ must be consistent with itself and $\X_T$, i.e.\ it should permit $\X_{i|k} = \X_{j|k}$ and $\X_{i|k} = \X_{T}$ for any $i,j\in\{0,\ldots,N\}$.

Recursive feasibility is the property that if the RHOCP is feasible at time $k$, then it will also be feasible at $k+1$. This property applies to (\ref{eq:RHOCP}) due to the recursive definition of the tube~(\ref{eq:normal_mpc_tube}) and the terminal constraint \eqref{eq:terminal_constraint}. 
In particular, at $k+1$ the shifted sequence 
\begin{equation}\label{eq:tail_sequence}
\tilde{v}_{0:N-1|k+1} = \{ v^\ast_{1:N-1|k}, \, 0\}  %\tilde{\X}_{0:N|k+1}=\{\X_{1:N|k}^\ast,\X_T\}
\end{equation}
satisfies the RHOCP constraints since ${x_{k+1}\in\smash{\X^\ast_{1|k}}}$ implies  (\ref{eq:normal_mpc_tube})-(\ref{eq:RHOCP}b,c) hold with
$\smash{\X_{i|k+1}}=\smash{\X_{i}(x_{k+1},v^\ast_{1:i|k})} \subseteq \smash{\X_{i+1|k}^\ast}$, 
% and (\ref{eq:RHOCP}d) holds with
% $\X_{N-1|k+1}=\X_{N-1}(x_{k+1},v^\ast_{1:N-1|k}) \subseteq \X_{N|k}^\ast$, so that
% $\X_{N|k+1}\subset\X_T$ due to (\ref{eq:single_RPI_condition}).
and~(\ref{eq:single_RPI_condition}) ensures $\smash{\X_{N|k+1}}\subseteq\X_T$
since $\smash{\X_{N-1|k+1}}=$ $\smash{\X_{N-1}(x_{k+1},v^\ast_{1:N-1|k})} \subseteq \smash{\X_{N|k}^\ast} \subseteq \smash{\X_T}$,
so (\ref{eq:RHOCP}d) holds.

%Therefore at $k+1$, the time-shifted previous solution, $\tilde{v}_{0:N-1|k+1} = \{ v^{*}_{1:N-1|k}, \, 0\}$, $\smash[t]{\tilde{\X}_{0:N|k-1}}=\{\X_{1:N|k},\X_T\}$ provides a feasible solution to the RHOCP.

Closed-loop stability of this generic tube MPC formulation follows from a Lyapunov-like property of the optimal RHOCP objective 
%a cost-decrease condition of the optimal value of the RHOCP across 
at consecutive time steps. To ensure robust stability, $P\succ 0$ in the terminal cost $\ell_T(x) = \|x\|^{2}_{P}$ is chosen to satisfy %
\begin{equation} \label{eq:single_terminal_condition}%
\ell_T\bigl(g(x, \kappa(x)) + d\bigr) - \ell_T(x) \leqslant {-\ell(x, 0)} + \sigma^{2}
%    \|x\|^{2}_{P} - \|g(x, \kappa(x)) + d\|^{2}_{P} \geqslant l(x, 0) - \sigma^{2}
\end{equation} 
for all $x\in \X_{T}$, for all $d \in \D$, for some constant $\sigma$. 
Under mild conditions (e.g.~$\X_T$, $\D$ are compact and $g$, $\kappa$ are Lipschitz continuous for $(x,u)\in\X_T\times\U$), this ensures that the MPC law $u_k = \kappa(x_k) + \smash{v^\ast_{0|k}}$ 
%shifted control sequence $\tilde{v}_{0:N-1|k}$ applied for all $k\geqslant 1$ 
yields a bounded cost $\smash{J_{k}^\ast}$, that the asymptotic time-average stage cost is bounded by $\sigma^2$ (implying a limit on the impact of disturbances on performance), and that the origin, $x=0$, of (\ref{eq:plant}) is input-to-state practically stable (ISpS)~\cite[Definition 6]{limon:2009} (implying that the system state converges to a neighbourhood of $x=0$ dependent on the size of the disturbance set $\D$).

%a no greater than that at the previous step, minus the stage cost $\ell$ at step $k$, plus some positive scalar $\sigma^{2}$ which determines the allowable impact of disturbances. This cost decrease property yields a finite bound on the average stage cost. 

\section{Multi-step predictors} \label{sec:multi_step_feasibility}
To incorporate multi-step models in the RHOCP \eqref{eq:RHOCP} we use the multi-step predictors~(\ref{eq:tube_section}) to construct a tube bounding the future states of the model~(\ref{eq:plant}) instead of the recursive single-step  conditions~(\ref{eq:normal_mpc_tube}).
The terminal set $\X_T$ and terminal cost $\ell_T$ are now assumed to satisfy RPI and Lyapunov-like conditions analogous to (\ref{eq:single_RPI_condition}) and (\ref{eq:single_terminal_condition}) based on~(\ref{eq:tube_section}) with $i=1$.

\begin{assum}\label{as:terminal_ingredients}
Terminal set $\X_T$ and cost $\ell_T(x)$ satisfy
\begin{gather}
\label{eq:terminal_cost}
\ell_T\bigl(\bar{f}_1(x, 0) + w\bigr) - \ell_T(x) \leqslant {-\ell(x, 0)} + \sigma^{2}
%\|x\|^{2}_{P} - \|f_{1}(x, \kappa(x)) + w\|^{2}_{P} \succeq \|x\|^{2}_{Q}+\|\kappa(x)\|^{2}_{R} - \sigma^{2}
\\
\bar{f}_1(x, 0) + w \in \X_{T}\subseteq\X \ \ \text{and} \ \ 
\kappa(\X_T) \subseteq \U 
% \{\bar{f}_1(x, 0)\} \oplus \W_{1} \subseteq \X_{T}\subseteq\X \ \ \text{and} \ \ 
% \kappa(x) \subseteq \U 
\label{eq:multi-step_terminal}
%\ \ \forall x\in \X_{T}.
\end{gather}
for all $(x,w)\in\X_T\times\W_1$, some $\sigma$, and $\ell_T(x) = \|x\|_P^2$, $P\succ 0$.
\end{assum}

%With multi-step predictors
With this change however,
the sets $\X_{i-1}(x_{k+1},v_{1:i-1|k})$ and $\X_{i}(x_k,v_{0:i-1|k})$ that bound the state of (\ref{eq:plant}) at time $i+k$ are no longer nested in general.
This potential inconsistency is due to the use of distinct predictors $\bar{f}_i,\bar{f}_{i-1}$ and disturbance sets $\W_{i},\W_{i-1}$. As a result, RHOCP feasibility at time $k$ no longer ensures feasibility at $k+1$.
%, as illustrated in Fig.~\ref{fig:state_trajectories}. 
In this section we address this issue using \textit{a priori} feasibility checks and predictor substitutions.\\[-1ex]

% \begin{figure}[htb]
%     \centering
%     \includegraphics[width=0.9\linewidth]{figures/state_diagram_updated.pdf}
%     \caption{Illustration of inconsistent predicted state bounds across time steps}
%     \label{fig:state_trajectories}
% \end{figure}

\subsection{Checking constraint satisfaction \textit{a priori}} \label{ssec:feasibility_check}
A computationally inexpensive sufficient condition for RHOCP feasibility can be checked  by evaluating the state and input trajectories generated by a prescribed set of control-perturbation sequences.
Let $\Vset_k$ be a finite set of candidate perturbation sequences $v_{0:N-1|k}\in\V^N$ at time $k$ 
and let $\I(x,v_{0:N-1},N)$ denote, for given $v_{0:N-1}$, initial state $x$, and horizon $N$, the indices of prediction time steps at which state or control constraints are violated by the multi-step prediction models of~(\ref{eq:tube_section}):
\begin{multline} \label{eq:feasibility_check}
\I(x,v_{0:N-1},N) = \bigl\{ i : 1 \leq i \leq N-1 
\\
\text{if }\X_{i}(x,v_{0:i-1}) \nsubseteq \X \text{ or } \kappa(\X_{i}(x,v_{0:i-1})) \oplus \{v_{i}\} \nsubseteq \U\bigr\}, 
\\
\cup \bigl\{N \text{ if } \X_{N}(x,v_{0:N-1}) \nsubseteq \X_T \bigr\}.
\end{multline}
A sufficient condition for RHOCP feasibility at time $k$ is
\begin{equation}\label{eq:feasibility_condition}
\exists \  v_{0:N-1|k}\in \Vset_k \ \text{such that} \ \I(x_k, v_{0:N-1|k},N) = \emptyset.    
\end{equation}
%Although $\Vset_{k}$ is not straightforward to compute explicitly, it is relatively easy to check whether a sequence $v_{0:N-1|k}$ belongs to $\Vset_k$ using multi-step predictors~(\ref{eq:tube_section}).
%for given $\bar{f}_i$ and $\W_i$. 
Given a set $\Vset_k$  containing the shifted sequence $\smash{\tilde{v}_{0:N-1|k}}$, this can be used to ensure recursive feasibility.%\\[-1ex]

%an \textit{a priori} certificate that the RHOCP is feasible.

\subsection{Feasibility recovery and predictor substitution}
Under Assumption \ref{as:correct_predictor_sets}, existence of an RHOCP solution at time $k$ implies that, at time $k+1$, the state of~(\ref{eq:plant}) can be safely steered to the RPI set $\X_T$ over a future horizon of $N$ steps, even if \eqref{eq:feasibility_condition} is violated at time $k+1$ due to inconsistency in multi-step predictors. However, as we show in this section, a feasible problem can be defined at time $k+1$ using the RHOCP solution at time~$k$ to replace some predictors at $k+1$. 

Assume the RHOCP has solution $\smash{(v^\ast_{0:N-1|k},\X^\ast_{0:N|k})}$ at time $k$ and let 
$\smash{\tilde{v}_{0:N-1|k+1}}= \smash{\{v^\ast_{1:N-1|k}, 0 \}}$.
Suppose that $\I_{k+1} = \I(x_{k+1},\tilde{v}_{0:N-1|k+1},N)$ is non-empty. 
Then, for all $i\in\I_{k+1}$ satisfying $i<N$, we can substitute the $i$-step predictor  with initial  state $x_{k+1}$ with an $(i+1)$-step predictor starting from $x_k$.
This simply replaces the set $\X_{i}(x_{k+1},\tilde{v}_{0:i-1|k+1})$ %$=\X_{i}(x_{k+1},v^\ast_{1:i|k})$ 
with $\smash{\X_{i+1|k}^\ast}=\smash{\X_{i+1}(x_k,v^\ast_{0:i|k})}$ and removes $i$ from $\I_{k+1}$ (since the RHOCP is by assumption feasible at time $k$)
without affecting the other elements of $\I_{k+1}$ (since $v^\ast_{0:i|k}=\{v^\ast_{0|k},\tilde{v}_{0:i-1|k+1}\}$).
%
%We define the infeasible predictor indices at $k$ as $\I_{k} = \psi(\X_{i:N|k})$. For all $i \in \I_{k}$, the sets $\X_{i|k}$ can be replaced with $\X_{i+1|k-1}$. 

\begin{table}[tb]
\centering
\caption{Feasibility testing example: \checkmark\ indicates constraint satisfaction, \textsf{X} indicates violation\label{tab:feasible_idxs}}%
\begin{tabular}{c|c@{~~}c@{~~}c@{~~}c@{~~}c@{~~}c@{~~}c@{~~}c}
\hline 
time- & \multicolumn{7}{c}{predicted time step}
\\[-1ex]
step & $k$ & $k+1$ & $k+2$ & $k+3$ & $k+4$& $k+5$ & $k+6$ & $k+7$ 
\\\hline
$k$ 
& \checkmark & \checkmark & \checkmark& \checkmark & \checkmark &&&
\\[-1ex]
$k+1$  
& & \checkmark & \checkmark & \checkmark & \textsf{X} & \checkmark &&
\\[-1ex]
$k+2$ 
& & & \checkmark & \checkmark & \textsf{X}  & \checkmark & \checkmark&  
\\[-1ex]
$k+3$ 
& & &  &\checkmark & \textsf{X}  & \checkmark & \checkmark & \checkmark \\\hline
\end{tabular}%
\end{table}

Predictor substitution can be repeated at later times if $\smash{\I(x_{k+p},\tilde{v}_{0:N-1|k+p},N)}$ is non-empty for $p>1$; thus predictors with initial state $x_k$ may certify feasibility at time $k+p$ for $p< N$.
Table~\ref{tab:feasible_idxs} illustrates a case in which feasibility of the RHOCP with $N=4$ is certified with substitutions at times $k+1$, $k+2$, and $k+3$, respectively:
%$k+p$ for $p=1,2,3$:
\begin{align*}
&\X_3(x_{k+1},\tilde{v}_{0:2|k+1}) \gets \X_4(x_k,v^\ast_{0:3|k}) ,
%\text{ at } k+1,
\\
&\X_2(x_{k+2},\tilde{v}_{0:1|k+2}) \gets \X_4(x_k,\{v^\ast_{0|k},v^\ast_{0:2|k+1}\}) ,
%\text{ at } k+2,
\\
&\X_1(x_{k+3},\tilde{v}_{0|k+3}) \gets \X_4(x_k,\{v^\ast_{0|k},v^\ast_{0|k+1},v^\ast_{0:1|k+2}\}) .
%\text{ at } k + 3.
\end{align*}
%
% Table~\ref{tab:feasible_idxs} illustrates the case that RHOCP feasibility is certified
% at $k+1$
% with $\X_3(x_{k+1},\tilde{v}_{0:2|k+1})$ replaced by $\X_4(x_k,v^\ast_{0:3|k})$,
% at $k+2$ with $\X_2(x_{k+2},\tilde{v}_{0:1|k+2})$ replaced by $\X_4(x_k,\{v^\ast_{0|k},v^\ast_{0:2|k+1}\})$,
% and at $k+3$ with $\X_1(x_{k+3},\tilde{v}_{0|k+3})$ replaced by $\X_4(x_k,\{v^\ast_{0|k},v^\ast_{0|k+1},v^\ast_{0:1|k+2}\})$.
%
% Table~\ref{tab:feasible_idxs} illustrates the case that RHOCP feasibility is certified at $k+p$ for $p=1,2,3$ by substituting $\X_3(x_{k+1},\tilde{v}_{0:2|k+1})$, $\X_2(x_{k+2},\tilde{v}_{0:1|k+2})$ and $\X_1(x_{k+3},\tilde{v}_{0|k+3})$
% respectively with
% $\X_4(x_k,v^\ast_{0:3|k})$
% $\X_4(x_k,\{v^\ast_{0|k},v^\ast_{0:2|k+1}\})$, 
% and $\X_4(x_k,\{v^\ast_{0|k},v^\ast_{0|k+1},v^\ast_{0:1|k+2}\})$
%since $\X^\ast_{4|k}=\X_4(x_k,v^\ast_{0:3|k}) \subseteq \X_T\subseteq \X$.
%
Predictor substitution cannot be used to replace the $N$-step ahead predictor at time $k+1$ because the RHOCP solved at time $k$ only considers future time steps up to time $k+N$. 
%In this situation we therefore rely 
%Hence if $N\in\I(x_k,\tilde{v}_{0:N-1|k})$, we 
Hence for the case that $\X_{N}(x_{k+1},\tilde{v}_{0:N-1|k+1}) \nsubseteq \X_T$, we 
rely on the RPI property (\ref{eq:multi-step_terminal}) of the terminal set $\X_T$ to certify RHOCP feasibility.
One possibility is to use a composite $N$-step predictor combining $(N-1)$-step and $1$-step predictors. However,
to maintain a prediction structure with one multi-step predictor for each future time step, we introduce a variable prediction horizon, $\bar{N}_k$, initialised with $\bar{N}_0 = N$. 
To maximise this horizon, at time $k+1$, $k\geq 0$, we set $\bar{N}_{k+1} \gets N$ and repeatedly apply the update
\begin{equation}\label{eq:horizon_shrink}
\begin{aligned}
\bar{N}_{k+1}&\gets
\max\{1, \bar{N}_{k}-1, \bar{N}_{k+1}-1\} 
\\
&\qquad 
 \text{if} \ \X_{\bar{N}_{k+1}}(x_{k+1},\tilde{v}_{0:N-1|k+1}) \nsubseteq \X_T.
\end{aligned}
\end{equation}
until there is no further change in $\bar{N}_{k+1}$.
This process terminates with either $\smash{\X_{\bar{N}_{k+1}}(x_{k+1},\tilde{v}_{0:N-1|k+1}) \subseteq \X_T}$,
or with $\bar{N}_{k+1} = \bar{N}_{k}-1$ or $1$; in the latter cases the feasibility of the terminal constraint at $k+1$ can be ensured by a predictor substitution since $\smash{\X^\ast_{\bar{N}_{k}|k}\subseteq \X_T}$ by construction.
Since~(\ref{eq:horizon_shrink}) searches for the longest horizon $\bar{N}_{k+1} \leqslant N$ such that terminal constraint feasibility is ensured, this policy reinstates the original horizon $\bar{N}_{k+1} = N$ whenever possible, thus reducing suboptimality by maintaining as many  control degrees of freedom in the RHOCP as possible.

% To retain a prediction structure with one multi-step predictor for each future time step,
% we introduce a variable length prediction horizon $\bar{N}_k$, defined at time $k+1$ by
% \begin{equation}\label{eq:horizon_shrink}
% \bar{N}_{k+1} =\begin{cases}
% \bar{N}_{k} & \text{if } \X_{\bar{N}_{k}}(x_{k+1},\tilde{v}_{0:\bar{N}_{k}-1|k}) \subseteq \X_T
% \\
% \max \{ 1, \bar{N}_{k} - 1 \} & \text{otherwise}
% \end{cases}
% \end{equation}
% with $\bar{N}_0 = N$. 
% Reducing the horizon by one in this way allows terminal constraint feasibility to be certified by a predictor substitution at time $k+1$ since $\X^\ast_{\bar{N}_{k}|k}\subseteq \X_T$ by construction.
%
% If, on the other hand, $\bar{N}_{k+1}=\bar{N}_k$ after applying (\ref{eq:horizon_shrink}) and $\bar{N}_k<N$, then we attempt to recover the original horizon by setting
% \begin{equation}\label{eq:horizon_grow}
% \bar{N}_{k+1} \gets \min\{\bar{N}_{k+1} + 1, N\}
% \end{equation}
% and reapplying (\ref{eq:horizon_shrink}) until $\bar{N}_{k+1}$ can no longer be increased. Increasing $\bar{N}_{k+1}$ in this way reduces RHOCP suboptimality by
% introducing additional optimisation variables and delaying the imposition of the terminal cost and terminal constraint.

To keep track of predictor substitutions at successive time steps, we denote the predictor indices employed in the RHOCP at time $k$ as $\smash{\S_{k}}=\{\smash{ s_{1|k}},\ldots, \smash{ s_{\bar{N}_k|k}}\}$.
Given $\smash{\bar{N}_{k+1}}$ and $\smash{\I_{k+1}} = \I(\smash{ x_{k+1}}, \smash{\tilde{v}_{0:N-1|k+1}}, \smash{\bar{N}_{k+1}})$, 
the update for $\S_{k+1}$ at $k+1$ is
%the update for $\eta_{k+1}$ at $k+1$ is
% %
% We initially set $\eta_{k+1}\gets\{1,\ldots,\bar{N}_{k+1}\}$, and for $i = \bar{N}_{k+1}, \ldots 1$ apply
% \begin{equation}\label{eq:feasible_indices}
% \eta_{i|k+1} \gets \eta_{i+1|k}  
% \quad \text{if} \ i \in  \I_{k+1} \text{ or (\ref{eq:cost_decrease_constraint}) is violated}
% \end{equation}
\begin{equation}\label{eq:feasible_indices}
s_{i|k+1} = \begin{cases}
s_{i+1|k} & \text{if} \ i \in  \I_{k+1},
\\
i & \text{otherwise},
\end{cases}
\quad
i=1,\ldots,\bar{N}_{k+1},
\end{equation}
which we denote for convenience as $\S_{k+1} = \S(\I_{k+1},\S_k)$.
When substituting a predictor at time $k$, the difference between the substituted predictor index and the index that would be used if no substitution were made is $p = s_{i|k}-i$. Hence for given $\S_k$, the predictors can be written, for $i = 1,\dots,\bar{N}_{k}$,
\begin{equation} \label{eq:substituted_tube}
    \X_{i|k} = 
    \bigl\{\bar{f}_{s_{i|k}} (x_{k-p}, \{v_{k-p:k-1},v_{0:i-1|k}\})\bigr\} \oplus \W_{s_{i|k}}
\end{equation}
with $\X_{0|k} = \{ x_{k}\}$ and $v_{k-p:k-1} = \{v^\ast_{0|k-p},\ldots,v^\ast_{0|k-1}\}$.\\[-1ex]

%, $s = s_{i|k}$.

% \subsection{Terminal Constraint and Horizon Shrinking}
% The proposed predictor substitution method \eqref{eq:feasible_indices} cannot be applied for the current terminal step $k+N$, since prior optimisation problems have only considered steps up to $k+N-1$. In this situation, we propose temporarily shrinking the horizon in order to recover the feasibility of the RHOCP. We therefore define the (possibly) shrunk horizon at time step $k$ as $\bar{N}_{k} \in \mathbb{Z}_{+}$. If necessary, this procedure can be applied more than once, such that  $\bar{N}_{k} \in \big\{ \mathrm{max}(1, \bar{N}[k-1]   - 1), \,\bar{N}[k-1] \big\}$. 

%\subsection{Horizon Recovery}

% Unlike the standard formulation, the multi-step RHOCP is not guaranteed satisfy the cost-decrease condition required for stability, due to the possible inconsistencies in the state tubes. Therefore, to ensure stability, we include in the RHOCP a constraint that the objective function, denoted $J_{k}$, satisfies the following condition
% \begin{equation} 
%     J^{*}_{k+1} \leq J^{*}_{k} - \ell(x_{k}, v_{k}) + \sigma ^{2}.
% \end{equation} 

\subsection{Predictor substitution algorithm}
Algorithm~\ref{alg:updates_fn_indices} summarises the procedure
% described in this section 
for ensuring RHOCP feasibility. At times $k >0$ the algorithm computes a horizon $\bar{N}_k$ and predictor indices $\S_k$ such that state and control constraints and the terminal constraint are satisfied for some perturbation sequence $\smash{ v_{0:N-1|k}}$, thus ensuring $\I(\smash{ x_k}, \smash{ v_{0:N-1|k}}, \smash{\bar{N}_k}) =\emptyset$ in~(\ref{eq:feasibility_check}).
The algorithm also imposes a constraint on the predicted cost included in the RHOCP to ensure an asymptotic bound on closed-loop performance:
%with initial state $x_{k}$, control perturbation $\tilde{v}_{0:N-1|k+1}$ and predictor substitutions $\eta_{k+1}$:
\begin{equation}\label{eq:cost_decrease_constraint}
J(v_{0:N-1|k}, x_k, \S_{k}) \leqslant J^\ast_{k-1} - \ell(x_{k-1},v^\ast_{0|k-1}) + \sigma^2.
\end{equation}
Here $J(\cdot)$ is the RHOCP objective 
%predicted cost 
defined in Section~\ref{sec:multi-step_rhocp} and
$\sigma$ satisfies Assumption~\ref{as:terminal_ingredients}.

{\setlength{\algomargin}{1.5em}
\begin{algorithm2e}
\caption{Predictor substitution with horizon shrinking and recovery}
\label{alg:updates_fn_indices}
\DontPrintSemicolon
\SetKwInOut{Input}{Input}
\SetKwInOut{Output}{Output}
\Input{$x_{k}$, $\tilde{v}_{0:N-1|k}$, $J^{*}_{k-1}$, $\bar{N}_{k-1}$, $\S_{k-1}$}
\Output{$\bar{N}_k$, $\S_k$}
% compute list of values $\alpha_{i}\in [0,1]$, $i=1,\ldots,n_{\alpha}$\;
construct a candidate set $\Vset_k$ containing the shifted sequence $\smash{\tilde{v}_{0:N-1|k}}=\{v^\ast_{1:N-1|k-1}, 0\}$\;
%$n^{\mathrm{subs}} \gets \bar{N}_{k-1}-1$\;
%$\I^{\mathrm{min}} \gets \{1, \, \dots, \, \bar{N}_{k-1}-1 \}$\;
$\bar{N} \gets N+1$\;
$\feasible \gets \False$\;
% HORIZON RECOVERY WHILE LOOP
\While{$\feasible = \False$ \textnormal{\textbf{and}} $\bar{N}\geqslant \bar{N}_{k-1}$ \textnormal{\textbf{and}} $\bar{N}>1$}{
    $\bar{N} \gets \bar{N}-1$\;
    $\I^{\min} \gets \{1, \, \dots, \, \bar{N}\}$\;
    % LINE SEARCH OVER CANDIDATE TRAJECTORIES
    % \For{$\alpha \in \{\alpha_{1}, \, \dots, \, \alpha_{n_{\alpha}}\}$}{}
    \For{$v_{0:N-1} \in\Vset_k$}{ 
        % compute $\X_{1:\bar{N}|k}$ given $v_{0:\bar{N}-1}$\;
        compute $\I \gets \I(x_k, v_{0:\bar{N}-1}, \bar{N})$ using \eqref{eq:feasibility_check}\;
        \If{$\{\bar{N}_{k-1}, \dots, \bar{N}\} \cap \I = \emptyset$ \textnormal{\textbf{or}} $\bar{N} < \bar{N}_{k-1}$}{
            compute $\S \gets \S(\I,\S_{k-1})$ using \eqref{eq:feasible_indices}\;
            % HERE WE NEED TO CHECK THE INDICES AGAIN - CONSTRAINT SATISFACTION FOR THE SUBSTITUTED SETS IS ONLY GUARANTEED UNDER THE SHIFTED SEQUENCE, BUT NOT UNDER ANY OF THE OTHER SEQUENCES IN THE SET \Vset
            compute $\X_{0:\bar{N}|k}$ using (\ref{eq:substituted_tube}) with $v_{0:\bar{N}-1|k} = v_{0:\bar{N}-1}$ and $\S_k=\S$\;
            \If{$\X_{i|k}\subseteq\X$ \textnormal{\textbf{and}} $\kappa(\X_{i|k}) \oplus\{ v_{i}\} \subseteq \U$ \textnormal{for} $i=0,\ldots,\bar{N}-1$ \textnormal{\textbf{and}} $\X_{\bar{N}|k}\subseteq \X_T$}{
                \If{$J(v_{0:N-1},x_k,\S) \leqslant J^{*}_{k-1} - \ell(x_{k-1}, v^\ast_{0|k-1}) + \sigma^{2}$}{
                    $\feasible \gets \True$\;
                    \If{$\lvert \I \rvert < \lvert \I ^{\min} \rvert$}{
                        $\I^{\min} \gets \I$\;
                    }
                }
            }
        }
    }
}
$\bar{N}_{k} \gets \bar{N}$; $\S_{k} \gets \S(\I^{\min},\S_{k-1})$\;
\end{algorithm2e}}

The \texttt{while} loop (lines 4-16) implements (\ref{eq:horizon_shrink}) and (\ref{eq:feasible_indices}) with the modification that, for each value of $\bar{N}$, feasibility is checked for all candidate perturbation sequences in $\Vset_k$. This set must contain $\tilde{v}_{0:N-1|k}$ defined in (\ref{eq:tail_sequence}), but it may also contain other sequences that are likely to yield a feasible problem with as few predictor substitutions as possible. 
%(Appendix~\ref{appendix:feasibility_check} proposes one such  candidate perturbation sequence for the numerical example in Section~\ref{sec:example}). 
% attempts to recover the longest horizon yielding a feasible problem. Starting with the full horizon $N$, we perform a line search between the shifted previous optimal sequence $\tilde{v}_{k:k+N-1}$, and some other candidate trajectory denoted $v^{(c)}_{k:k+N-1} \in \Vset_{k}$. 
%
Since the predictor substitutions in $\S$ (computed in line 10) are only guaranteed to satisfy constraints if $v_{0:N-1}=\tilde{v}_{0:N-1|k}$, we perform an additional constraint check (lines 11-12).
The constraint (\ref{eq:cost_decrease_constraint}) on the predicted cost applies simultaneously to all stages, therefore its feasibility is checked (line 13) after computing $\S$.
If it is possible to generate a feasible problem using a candidate sequence in $\Vset_k$, the \texttt{for} loop (lines 7-16) determines the fewest predictor substitutions required and stores their indices in $\I^{\min}$  (line 16).
%check for state and input constraint satisfaction for the generated state tube. We then apply any necessary predictor substitutions and check the cost-decrease condition \eqref{eq:cost_decrease_constraint}. If satisfied, then the multi-step RHOCP with substitutions is known to be feasible.  
%
If no candidate in $\Vset_k$ generates a feasible problem, the algorithm executes the next iteration of the while loop with $\bar{N}$ reduced by $1$
until $\bar{N}=\max\{1,\bar{N}_{k-1}-1\}$. At this point, a feasible RHOCP is guaranteed to exist, possibly using substituted predictors at every step, in which case
$\lvert \I^{\min}\rvert = \bar{N}$ and 
$s_{i|k} = s_{i+1|k-1}$ for $i=1,\ldots, \bar{N}$.

Note that Assumption~\ref{as:terminal_ingredients} implies $\X_1(x,0)\subseteq \X_T$ for all $x\in\X_T$. If, in addition, $\X_N(x,0)\subseteq\X_T$ for all $x\in\X_T$, then Algorithm~\ref{alg:updates_fn_indices} recovers $\bar{N}_k=N$ whenever $\bar{N}_{k-1}=1$.

\section{Multi-step MPC with predictor substitution} \label{sec:multi-step_rhocp}
%The substituted predictors a further modification to the cost function. 
The multi-step RHOCP cost is defined at the $k$th time step in terms of a worst-case cost index analogous to~(\ref{eq:normal_mpc_cost}),
%\begin{equation}\label{eq:new_mpc_cost}
\[
  J(v_{0:N-1|k}, x_k, \S_k) = 
\! \sum_{i = 0}^{\bar{N}_k-1} 
\! \max_{x\in\X_{i|k}\!\!} \ell(x,v_{i|k})  + \max_{x\in \X_{\bar{N}_{k}|k}\!\!} \ell_T(x) .
\]%\end{equation}
For a given state $x_k$ and predictor indices $\S_k$, and with predictors $\X_{1:\bar{N}_k}$ defined by (\ref{eq:substituted_tube}), we define the RHOCP%
\begin{subequations}\label{eq:RHOCP_subs}%
\begin{equation}\label{eq:RHOCP_objective}
v^\ast_{0:N-1|k} = \argmin_{v_{0:N-1}}  \, J(v_{0:N-1}, x_k, \S_k)
\end{equation}
subject to $\X_{0|k} = \{x_k\}$ and $v_{i|k}=0$ for all $i\geq \bar{N}_k$,
and subject to, for $i=0,\ldots,\bar{N}_k-1$,%
%the multi-step dynamics \eqref{eq:multi-step_dynamics}, the substituted tube \eqref{eq:substituted_tube} with all past perturbations consistent with the actual applied values, and, for $i = 0, \dots, \bar{N}_{k}-1$:
\begin{align}%
\X_{i|k} &\subseteq \X
\label{eq:state_constraint_subs} \\
\kappa(\X_{i|k}) \oplus \{v_{i|k}\} & \subseteq \U 
\label{eq:input_constraint_subs} \\
\X_{\bar{N}_k|k} &\subseteq \X_{T} 
\label{eq:terminal_constraint_sub}
\end{align}
and subject to the cost upper-bound
\begin{equation}
J(v_{0:N-1}, x_k, \S_k) \leqslant J^\ast_{k-1} -  \ell(x_{k-1}, v^\ast_{0|k-1})+ \sigma^{2} .
\label{eq:cost_decrease_constraint_subs}
\end{equation}
\end{subequations}
%and subject to $v_{i|k} = 0$ for $i\geq \bar{N}_k$.
This problem is the basis of the multi-step MPC strategy.
% summarised in Algorithm~\ref{alg:mpc_algorithm}. 

% \begin{assum}\label{as:terminal_cost}
%     There exists a positive-semidefinite weighing matrix $P$ for the terminal cost $\|x\|^{2}_{P}$ such that \begin{equation}\label{eq:lyapunov_like_condition}
%     \|x\|^{2}_{P} - \|f_{1}(x, \kappa(x)) + w\|^{2}_{P} \succeq \|x\|^{2}_{Q}+\|\kappa(x)\|^{2}_{R} - \sigma^{2}
%     \end{equation}
%     $\forall x \in \X_{T}, \ w \in \W_{1}$
% \end{assum}

{\setlength{\algomargin}{1em}
\begin{algorithm2e}
\caption{Multi-step MPC} \label{alg:mpc_algorithm}
\DontPrintSemicolon
\SetKwInOut{Input}{Input}
\SetKwInOut{Output}{Output}
\Input{$x_{k}$, $v^\ast_{0:N-1|k-1}$, $J^\ast_{k-1}$, $\bar{N}_{k-1}$, $\S_{k-1}$}
\Output{$u_k$, $v^\ast_{0:N-1|k}$, $J^\ast_k$, $\bar{N}_{k}$, $\S_{k}$}
\tcc{Online computation at $k=0$:}
set $\bar{N}_0=N$, $S_0 = \{1,\ldots,N\}$, solve (\ref{eq:RHOCP_objective})-(\ref{eq:terminal_constraint_sub}), set
$J_0^\ast\gets J(v^\ast_{0:N-1|0},x_0,\S_0)$, $u_0\gets \kappa(x_0) + v^\ast_{0|0}$\;
\tcc{Online computation at $k=1,2,\ldots$:}
compute $\tilde{v}_{0:N-1|k}$ using $v^\ast_{0:N-1|k-1}$\;
compute $\bar{N}_k$ and $\S_k$ using Algorithm~\ref{alg:updates_fn_indices}\;
solve~(\ref{eq:RHOCP_subs}) for $v^\ast_{0:N-1|k}$ and set $J^\ast_k\gets J(v^\ast_{0:N-1|k},x_k,\S_k)$\;
set $u_k \gets \kappa(x_k) + v^\ast_{0|k}$
\end{algorithm2e}}

% \begin{algorithm}[h!]
% \caption{Multi-step MPC Algorithm With Recursive Feasibility} \label{alg:mpc_algorithm}
% \begin{algorithmic}[1]
% \REQUIRE $v^{*}_{0:k+N-1|0}$, $\bar{N}_{0}$, $\eta_{0}$,  $x_{0}$
% \STATE Compute online at time $k = 1, \ 2, \ \dots$
% \STATE Compute $\tilde{v}_{k:k+N-1}$ and $v^{(c)}_{k:k+N-1}$
% \STATE Compute $\eta_{k}$ and $\bar{N}_{k}$ using Algorithm \ref{alg:updates_fn_indices}
% \STATE Solve RHOCP with predictor substitutions \eqref{eq:RHOCP_subs}
% \STATE Apply control $u_{k} = \kappa(x_{k}) + v^{\,*}_{k|k}$
% \end{algorithmic}
% \end{algorithm}

The recursive feasibility and asymptotic performance guarantees of Algorithm~\ref{alg:mpc_algorithm} can be shown as follows.

\begin{thm} \label{theorem:recursive_feasibility}
If RHOCP \eqref{eq:RHOCP_subs} is feasible at initial time, $k=0$, then it remains feasible at all time steps, $k > 0$.
\end{thm}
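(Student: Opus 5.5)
The proof goes by induction on $k$. Suppose \eqref{eq:RHOCP_subs} is feasible at time $k$ with solution $v^\ast_{0:N-1|k}$, horizon $\bar{N}_k$, indices $\S_k$ and tube $\X^\ast_{0:\bar{N}_k|k}$. I will show that at time $k+1$ the shifted sequence $\tilde{v}_{0:N-1|k+1}=\{v^\ast_{1:N-1|k},0\}$ is feasible for \eqref{eq:RHOCP_subs}. The horizon is $\bar{N}=\max\{1,\bar{N}_k-1\}$ and the indices are $s_{i|k+1}=s_{i+1|k}$ for all $i$. This is exactly the fallback that Algorithm~\ref{alg:updates_fn_indices} reaches at the end of its while loop. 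It then follows that the algorithm always terminates with $\feasible=\True$ and returns some $(\bar{N}_{k+1},\S_{k+1})$ for which a feasible point exists, namely the candidate that passed lines 11--13. So it suffices to prove that this worst-case choice satisfies \eqref{eq:state_constraint_subs}--\eqref{eq:cost_decrease_constraint_subs}.

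\textbf{Step 1: the fully substituted tube equals the tail of the previous tube.} Apply \eqref{eq:substituted_tube} with $s_{i|k+1}=s_{i+1|k}$. The offset is $p'=s_{i+1|k}-i=p+1$, where $p=s_{i+1|k}-(i+1)$ is the offset used at time $k$. The initial state is therefore the same $x_{k-p}$, and the perturbation argument is $\{v_{k-p:k-1},v^\ast_{0|k},v^\ast_{1:i|k}\}$, which is the argument used at time $k$. Hence $\X_{i|k+1}=\X^\ast_{i+1|k}$ for $i=1,\ldots,\bar{N}_k-1$. The constraints \eqref{eq:state_constraint_subs} and \eqref{eq:input_constraint_subs} at stages $i\ge1$ then follow from those at time $k$, since $\tilde v_{i|k+1}=v^\ast_{i+1|k}$.

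Stage $i=0$ needs $x_{k+1}\in\X$ and $\kappa(x_{k+1})+v^\ast_{1|k}\in\U$. By Assumption~\ref{as:correct_predictor_sets}, $x_{k+1}$ lies in $\X_{s}(x_{k-p},\cdot)$ for every applied chain, so $x_{k+1}\in\X^\ast_{1|k}$ and both conditions hold. For the terminal constraint \eqref{eq:terminal_constraint_sub}, take first $\bar{N}_k\ge2$: then $\bar{N}=\bar{N}_k-1$ and $\X_{\bar{N}|k+1}=\X^\ast_{\bar{N}_k|k}\subseteq\X_T$. If $\bar{N}_k=1$, then $x_{k+1}\in\X^\ast_{1|k}\subseteq\X_T$. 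With $\tilde v_{0|k+1}=0$ (since $v_{i|k}=0$ for $i\ge\bar{N}_k$), the unsubstituted predictor gives $\X_1(x_{k+1},0)\subseteq\X_T$ and $\kappa(x_{k+1})\in\U$ by \eqref{eq:multi-step_terminal}.

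\textbf{Step 2: the cost bound \eqref{eq:cost_decrease_constraint_subs}.} The tube is shared, so the stage-cost terms of $J$ at time $k+1$ for $i=0,\ldots,\bar{N}-1$ match the terms $i+1$ at time $k$. The exception is the $i=0$ term, which is $\ell(x_{k+1},v^\ast_{1|k})\le\max_{\X^\ast_{1|k}}\ell(\cdot,v^\ast_{1|k})$. The term $i=0$ at time $k$ is exactly $\ell(x_k,v^\ast_{0|k})$ and drops out.

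When $\bar{N}_k\ge2$, the terminal term is unchanged, so $J_{k+1}\le J^\ast_k-\ell(x_k,v^\ast_{0|k})$, which is stronger than required. When $\bar{N}_k=1$, we need $\ell(x_{k+1},0)+\max_{\X_1(x_{k+1},0)}\ell_T\le\ell_T(x_{k+1})+\sigma^2$. This follows from \eqref{eq:terminal_cost}. We also have $\ell_T(x_{k+1})\le\max_{\X^\ast_{1|k}}\ell_T$, which is the time-$k$ terminal term.

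\textbf{Main obstacle.} The delicate step is the bookkeeping in Step 1: checking that the shifted index $s_{i+1|k}$ with offset $p+1$ reproduces exactly the time-$k$ set, including the cases where the time-$k$ set was itself substituted. A second difficulty is Step 2 in the shrinking case $\bar{N}_k=1$, where the terminal ingredients must absorb the change of horizon. I would handle the bookkeeping by unfolding \eqref{eq:substituted_tube} once and noting that the applied perturbations $v_{k-p:k}$ are appended consistently.
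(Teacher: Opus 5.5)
Your proposal is correct and is essentially the paper's proof: your $\bar{N}_k\geqslant 2$ and $\bar{N}_k=1$ analyses are the paper's cases~2 and~3 (the fully substituted shifted tail with its cost drop, and the zero sequence from $x_{k+1}\in\X_T$ via Assumption~\ref{as:terminal_ingredients}), with the identity $\X_{i|k+1}=\X^\ast_{i+1|k}$ spelled out where the paper just says ``trivially''. Any other output of Algorithm~\ref{alg:updates_fn_indices} is handled, as in the paper's case~1, by the candidate that passed lines~11--13.

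One caveat, shared with the paper's case~2: your claim that the algorithm always ends with $\feasible=\True$ does not follow. The algorithm tests $\tilde{v}$ only with the partial substitution $\S(\I,\S_k)$, and under that substitution (\ref{eq:cost_decrease_constraint_subs}) can fail. In that event the loop exits with $\I^{\min}=\{1,\ldots,\bar{N}\}$, which is exactly the fully substituted configuration you verified, so your conclusion still stands.
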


\vspace{-\baselineskip}
\begin{pf}
Assume the RHOCP \eqref{eq:RHOCP_subs} is feasible at time step $k-1$ for $k>0$. Three possibilities exist for the RHOCP arising from Algorithm~\ref{alg:updates_fn_indices} at time $k$,
\\
%\mbox{}\quad \begin{tabular}{@{}l@{~~}l@{}}
\textbf{case 1:}~  $\bar{N}_{k} \geqslant \bar{N}_{k-1} > 1$ or $\bar{N}_{k} > \bar{N}_{k-1} = 1$,
\\
\textbf{case 2:}~ $\bar{N}_{k} = \bar{N}_{k-1}-1$, 
\\
\textbf{case 3:}~ $\bar{N}_{k} = \bar{N}_{k-1} = 1$.
%\end{tabular}

In case 1, Algorithm~\ref{alg:updates_fn_indices} terminates with $\textit{feasible} = \text{True}$, indicating that a perturbation sequence $v_{0:N-1}\in\Vset_k$ and predictor sequence $\S_k$ exist such that the state, input, terminal constraints, and cost bound \eqref{eq:state_constraint_subs}-\eqref{eq:cost_decrease_constraint_subs} are satisfied, thus guaranteeing feasibility of the RHOCP~(\ref{eq:RHOCP_subs}). 

In case 2, Algorithm~\ref{alg:updates_fn_indices} also terminates with $\textit{feasible} = \text{True}$.
%, thus guaranteeing RHOCP feasibility. 
This is because the time-shifted RHOCP solution, $\smash{\tilde{v}_{0:N-1|k}}\in\Vset_k$, trivially satisfies \eqref{eq:state_constraint_subs}-\eqref{eq:terminal_constraint_sub} 
with $\smash{\bar{N}_k}=\smash{\bar{N}_{k-1}}-1$ and with predictors substituted at every future time step, i.e.~$\smash{\S_k} = \{\smash{s_{2|k-1}},\ldots,\smash{s_{\bar{N}_{k-1}|k-1}}\}$
%i.e.~$s_{i|k} = s_{i+1|k-1}$, $i=1,\ldots,\bar{N}_k$
%
(no predictor is associated with stage $i=0$ since $\smash{x_k\in\X_{1|k}^\ast}$ due to Assumption~\ref{as:correct_predictor_sets}),
and because this choice of $\S_k$ gives 
\[
J(\tilde{v}_{0:N-1|k}, x_k, \S_k) 
- J^\ast_{k-1}+  \ell(x_{k-1}, v^\ast_{0|k-1}) \leqslant  0,
\]
so \eqref{eq:cost_decrease_constraint_subs} also holds. Hence a feasible RHOCP solution will be found by Algorithm~\ref{alg:updates_fn_indices} since $\tilde{v}_{0:N-1|k}\in\Vset_k$.
% Perhaps mention the case where this terminates with only a subset of substitutions -> same logic holds as for case 1. 

In case 3, we have $x_k\in\smash{\X_{1|k-1}^\ast}\subset\smash{\X_T}$. Therefore
Assumption~\ref{as:terminal_ingredients} implies \eqref{eq:state_constraint_subs}-\eqref{eq:terminal_constraint_sub} hold.
%with $\smash{\tilde{v}_{0:N-1|k}}= \{0,\ldots,0\}$. 
Moreover, \eqref{eq:cost_decrease_constraint_subs} holds since $\smash{\max_{x\in\X_{1|k-1}^\ast} \ell_T(x)} \geqslant \ell_T(x_k)$, and hence
\begin{align*}
%&J(\tilde{v}_{0:N-1|k}, x_k, \S_k) 
&J(\{0,\ldots,0\}, x_k, \S_k) 
- J^\ast_{k-1} + \ell(x_{k-1}, v^\ast_{0|k-1}) 
\\
& \qquad 
\leqslant
\max_{x\in\X_1(x_k,0)} \ell_T(x) + \ell(x_k,0) - \ell_T(x_k) \leqslant \sigma^2 ,
\end{align*}
where the final inequality is due to Assumption~\ref{as:terminal_ingredients}.
%
% % CAN THE HORIZON NEVER REACH MAYBE WE ADD A SPECIAL CASE THAT IN THIS SITUATION WE ALSO CHECK FOR V = 0?
% Furthermore, if horizon shrinks to $\bar{N}_{k} = 1$, case \textbf{4} is guaranteed not to occur at $k+1$ due to the terminal constraint, and the definition of the RPI terminal set. This guarantees that a problem can be constructed with a horizon length of at least 1, for which $v=0$ is a feasible, yet possibly suboptimal solution. 
\qed
\end{pf}

%\subsection{Closed-loop stability}\label{sec:stability}
\begin{thm}
The closed-loop system of (\ref{eq:plant}) with Algorithm~\ref{alg:mpc_algorithm} satisfies the constraints $x_k \in \X$ and $u_k \in \U$ for all $k\geqslant 0$ if the RHOCP~(\ref{eq:RHOCP_subs}) is feasible at $k=0$. 
Furthermore, the closed-loop system satisfies the bound
\[
\limsup_{t \to \infty} \frac{1}{t} \smash[t]{\sum^{t-1}_{\tau=0}} \bigl(\|x_{\tau}\|^{2}_{Q} + \|u_{\tau}\|^{2}_{R} \bigr) \leqslant \sigma^{2}.
\]
\end{thm}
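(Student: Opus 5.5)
The plan is to derive both claims from Theorem~\ref{theorem:recursive_feasibility} together with the cost bound~(\ref{eq:cost_decrease_constraint_subs}), which is imposed as a hard constraint for every $k\geqslant 1$.

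\textbf{Constraint satisfaction.} By Theorem~\ref{theorem:recursive_feasibility}, feasibility at $k=0$ gives a solution $v^\ast_{0:N-1|k}$ of~(\ref{eq:RHOCP_subs}) at every $k\geqslant 0$. For the state, I use $\X_{0|k}=\{x_k\}$, so constraint~(\ref{eq:state_constraint_subs}) with $i=0$ gives $x_k\in\X$ directly. For the input, constraint~(\ref{eq:input_constraint_subs}) with $i=0$ gives $\kappa(x_k)+v^\ast_{0|k}\in\U$. This is exactly the applied input $u_k$ in Algorithm~\ref{alg:mpc_algorithm}. Assumption~\ref{as:correct_predictor_sets} is not needed for this part, since only the $i=0$ stage is used and it contains the measured state exactly.

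\textbf{Performance bound.} At each $k\geqslant 1$, the optimal value $J^\ast_k=J(v^\ast_{0:N-1|k},x_k,\S_k)$ satisfies~(\ref{eq:cost_decrease_constraint_subs}), i.e.
\[
J^\ast_k \leqslant J^\ast_{k-1}-\ell(x_{k-1},v^\ast_{0|k-1})+\sigma^2 .
\]
The stage cost is $\ell(x_{k-1},v^\ast_{0|k-1})=\|x_{k-1}\|_Q^2+\|\kappa(x_{k-1})+v^\ast_{0|k-1}\|_R^2=\|x_{k-1}\|_Q^2+\|u_{k-1}\|_R^2$. Summing the inequality over $k=1,\ldots,t$ telescopes to
\[
\sum_{\tau=0}^{t-1}\bigl(\|x_\tau\|_Q^2+\|u_\tau\|_R^2\bigr)\leqslant J^\ast_0-J^\ast_t+t\sigma^2\leqslant J^\ast_0+t\sigma^2 .
\]
The second inequality uses $J^\ast_t\geqslant 0$, which holds because $\ell\geqslant 0$ and $\ell_T\geqslant 0$ since $Q,R,P\succ 0$. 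Dividing by $t$ and taking $\limsup_{t\to\infty}$ gives the stated bound, provided $J^\ast_0$ is finite.

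\textbf{Main obstacle.} The only delicate point is the finiteness of $J^\ast_0$, together with the requirement that $J^\ast_{k-1}$ in~(\ref{eq:cost_decrease_constraint_subs}) is exactly the value used in the telescoping sum. For finiteness, I would note that the tube sections are bounded: $\bar{f}_i$ takes values at finitely many points and $\W_i$ is compact, so each maximum in $J$ is attained and finite. For the second point, Algorithm~\ref{alg:mpc_algorithm} sets $J^\ast_k$ to the cost of the returned solution, so the inequality chain matches exactly, even if the solver returns a suboptimal feasible point.
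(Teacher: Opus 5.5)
Your proposal is correct and follows essentially the same route as the paper: the $i=0$ constraints with $\X_{0|k}=\{x_k\}$ give $x_k\in\X$ and $u_k\in\U$, and the cost bound \eqref{eq:cost_decrease_constraint_subs} is telescoped to bound the time-average stage cost. You also spell out two steps the paper leaves implicit, namely that $J^\ast_t\geqslant 0$ (from $Q,R,P\succ 0$) and that $J^\ast_0$ is finite.
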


\vspace{-1.5\baselineskip}
\begin{pf}\label{proof:stability}
Due to recursive feasibility and $\smash{\X_{0|k}} = \smash{\{x_k\}}$, constraints (\ref{eq:state_constraint_subs}) and (\ref{eq:input_constraint_subs}) at $i=0$ imply $x_k\in\X$ and $u_k = \kappa(x_k) + \smash{v^\ast_{0|k}}\in\U$ for all $k\geqslant 0$.
%Theorem~\ref{theorem:recursive_feasibility} implies that the RHOCP~(\ref{eq:RHOCP_subs}) is recursively feasible. Therefore, 
From~\eqref{eq:cost_decrease_constraint_subs} we have, for all $k>0$,
$\smash{J^\ast_{k}} \leqslant \smash{J^\ast_{k-1}} - \smash{\ell(x_{k-1},v^\ast_{0|k-1})} + \sigma^2 =
\smash{J^\ast_{k-1}} - \smash{\|x_{k-1}\|^{2}_{Q}} - \smash{\|u_{k-1}\|^{2}_{R}} + \sigma^{2}$,
and since $Q,R\succ 0$, it follows that $J^\ast_k$ is bounded for all $k>0$ and $\sigma^2$ upper-bounds the time-average stage cost.
\qed
\end{pf}

\begin{cor}
If, in Assumption~\ref{as:terminal_ingredients}, $\smash{\sigma^2} \leqslant \smash{\beta(\rho_0(\W_1))}$, where $\beta:\R_{\geqslant 0} \to \R_{\geqslant 0}$ is continuous, strictly increasing with $\beta(0)=0$, and $\smash{\rho_0({\W_1})}=\smash{\max_{w\in\W_1}\|w\|}$,
%is the radius of $\W$, 
then system~\eqref{eq:plant} with Algorithm~\ref{alg:mpc_algorithm} is ISpS on the set of initial states for which (\ref{eq:state_constraint_subs})-(\ref{eq:terminal_constraint_sub}) is feasible.
\end{cor}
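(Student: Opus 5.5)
We will use the optimal value $J^\ast_k$ as an ISpS Lyapunov function in the sense of \cite[Definition 6]{limon:2009}, and invoke the Lyapunov characterisation of ISpS given there. The domain is the set $\mathcal{F}$ of initial states for which (\ref{eq:state_constraint_subs})-(\ref{eq:terminal_constraint_sub}) is feasible. By Theorem~\ref{theorem:recursive_feasibility}, this set is robustly positively invariant for the closed loop of Algorithm~\ref{alg:mpc_algorithm}. The function $J^\ast_k$ depends on the augmented state $(x_k,\bar N_k,\S_k,$ past inputs$)$. We therefore treat the closed loop as a system on an extended state in which the discrete memory ranges over a finite set. We then need three bounds, each with $\mathcal{K}_\infty$ functions that are uniform in this memory.

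\textbf{Lower bound.} Since $\X_{0|k}=\{x_k\}$ and $\ell_T\geqslant 0$, we have $J^\ast_k\geqslant \ell(x_k,v^\ast_{0|k})\geqslant \lambda_{\min}(Q)\|x_k\|^2$. This gives $\alpha_1(s)=\lambda_{\min}(Q)s^2$.

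\textbf{Decrease.} Constraint (\ref{eq:cost_decrease_constraint_subs}) gives
\[
J^\ast_{k+1}-J^\ast_k\leqslant -\ell(x_k,v^\ast_{0|k})+\sigma^2\leqslant -\lambda_{\min}(Q)\|x_k\|^2+\beta(\rho_0(\W_1)).
\]
This is exactly the ISpS dissipation inequality, with $\alpha_3=\alpha_1$ and the disturbance term $\gamma(\|w\|)$ replaced by $\beta$ of the disturbance bound. Since $\W_1$ bounds the one-step effect of $d$, we have $\rho_0(\W_1)$ related to $\max_{d\in\D}\|d\|$. We would note that this is the sense in which the disturbance enters, i.e.\ practical stability with respect to the size of the uncertainty set.

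\textbf{Upper bound (main obstacle).} We need $J^\ast_k\leqslant \alpha_2(\|x_k\|)+c$ on $\mathcal{F}$. We will first use compactness: $\X$, $\U$ and the $\W_i$ are compact, $\ell$ and $\ell_T$ are continuous, and every tube section lies in $\X$ or $\X_T$. Hence $J^\ast_k\leqslant \bar J:=\bar N\max_{\X\times\V}\ell+\max_{\X_T}\ell_T<\infty$ uniformly. Taking $\alpha_2(s)=\bar J s^2/r^2$ for small $s$ with a constant offset is allowed, since the ISpS definition permits a constant $c$ in the upper bound. Concretely, we set $\alpha_2(s)=\lambda_{\max}(P)s^2$ and $c=\bar J$, so that $J^\ast_k\leqslant\alpha_2(\|x_k\|)+c$ holds trivially. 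The difficulty is that the ISpS gain then contains this constant, i.e.\ the practical offset is not tied to $\D$. To obtain a sharper offset, we would instead argue near the origin. For $x_k\in\X_T$, the candidate $v=0$ with horizon $1$ (as in case 3 of Theorem~\ref{theorem:recursive_feasibility}) gives cost $\ell(x_k,0)+\max_{\X_1(x_k,0)}\ell_T\leqslant \ell_T(x_k)+\sigma^2$ by Assumption~\ref{as:terminal_ingredients}. However, the constraint (\ref{eq:cost_decrease_constraint_subs}) and the memory prevent $J^\ast_k$ from being bounded by that candidate directly. We therefore accept the compactness bound with offset, which suffices for practical stability.

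\textbf{Conclusion.} Combining the three bounds and iterating the decrease inequality yields the standard estimate
\[
\|x_k\|\leqslant \hat\beta(\|x_0\|,k)+\gamma(\beta(\rho_0(\W_1)))+c',
\]
via the comparison lemma in \cite{limon:2009}. This is ISpS on $\mathcal{F}$.
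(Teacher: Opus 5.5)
Your argument is correct and follows the paper's route: the paper's proof just asserts that $J^\ast_k$ is an ISpS Lyapunov function by virtue of \eqref{eq:cost_decrease_constraint_subs} and cites \citet{limon:2009}, and you supply the lower bound, the decrease inequality and the offset upper bound that this leaves implicit. Two small caveats. First, Theorem~\ref{theorem:recursive_feasibility} gives recursive feasibility of the memory-dependent problem, not positive invariance of $\mathcal{F}$ itself, though recursive feasibility is all your augmented-state iteration actually needs. Second, as you note, with $c=\bar J$ the conclusion already follows from $\lambda_{\min}(Q)\|x_k\|^2\leqslant J^\ast_k\leqslant\bar J$, so the practical offset is set by $\bar J$ rather than by $\sigma^2$; the paper's one-line proof does not supply a sharper upper bound either.
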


\vspace{-0.5\baselineskip}
\begin{pf}
Under the conditions of the theorem, 
%$\ell_T(x)$ is an ISS Lyapunov function for the system $x_{k+1}=\bar{f}_1(x,0)+w$ for $x\in\X_T$, 
the RHOCP objective $\smash{J^\ast_{k}} = {J(v^\ast_{0:N-1|k},x_k,\S_k)}$ is an ISpS Lyapunov function due to~\eqref{eq:cost_decrease_constraint_subs}~\citep{limon:2009}.
\end{pf}

\section{Numerical example} \label{sec:example}
We demonstrate the multi-step MPC and predictor substitution algorithms using a simple example problem. While multi-step models are typically used in the context of data-driven control, to simplify analysis we consider a multi-step model constructed from repeated compositions of a single-step model.
Consider a nonlinear system in the form of (\ref{eq:plant}) with
\begin{equation}
\label{eq:example_system}
g(x, u) = Ax + Bu + \phi(x)
\end{equation}
where $x=[x_1 \ x_2]^\top\in\R^2$, $u\in\R^1$, and
\[
\phi(x) = \Bigl[\begin{smallmatrix}
    0 \\ e^{\alpha x_{2}}-1
\end{smallmatrix}\Bigr], 
\quad A = \Bigl[\begin{smallmatrix} 0.92 & 0.45 \\ 0.35 & 1.15 \end{smallmatrix}\Bigr], 
\quad B = \Bigl[\begin{smallmatrix} 0 \\ 1 \end{smallmatrix}\Bigr],
\]
with $\alpha = 0.25$. The disturbance in~(\ref{eq:plant}) is $d_k=w_k$ where $w = [w_{1} \ w_{2}]^\top$ represents a bounded additive process noise with $|w_{1}| \leqslant 0.08$ and $|w_{2}| \leqslant 0.12$. The system is subject to linear state and input constraints, $x\in\X = \{x : Hx \leqslant \mathbf{1}\}$ and $u\in\U = [-0.35 \, \ 0.35]$, where
\[
  H = \left[\begin{smallmatrix}
    ~~0.67 & ~~0.67 \\
    ~~0.67 & -1.00 \\
    -1.09 & -1.28 \\
    -1.37 & ~~1.00 \\
    -0.56 & ~~2.22
  \end{smallmatrix}\right], \quad
  \mathbf{1} = \left[\begin{smallmatrix} 1 \\ 1 \\ 1 \\ 1 \\ 1 \end{smallmatrix}\right].
\]
Given a future control sequence $u_{0:N-1|k}$ with horizon $N=10$, the following multi-step model is obtained 
%from compostions of the function $g$,
\begin{equation} \label{eq:example_multi-step}
        f_{i}(x_{k}, u_{0:i-1|k}) = g\bigl(\dots (g(x_{k}, u_{0|k}), \dots), u_{i-1|k}\bigr).
\end{equation}
% \begin{equation} \label{eq:example_multi-step}
%     \begin{gathered}
%         f_{1}(x_{k}, u_{0|k}) = g(x_{k}, u_{0|k}), \\
%         f_{2}(x_{k}, u_{0:1|k}) = g\bigl(g(x_{k}, u_{0|k}), u_{1|k}\bigr), \\
%         \vdots \\ 
%         f_{N}(x_{k}, u_{0:N-1|k}) = g\bigl(\dots (g(x_{k}, u_{0|k}), \dots), u_{N-1|k}\bigr),
%     \end{gathered}
% \end{equation}
% with $N = 10$.
We define the feedback law \eqref{eq:control_law} as $\kappa(x) = Kx$, so that the control law is given by $u = Kx + v$, where $K = [{-0.35} \,\, {-1.15}]$.
%, and define the closed-loop LDI matrices $\Phi^{(j)} = \hat{A}^{(j)} + \hat{B}^{(j)}K$. 
The closed-loop multi-step predictors $\bar{f}_{i}$ are as defined in \eqref{eq:tube_section}. 

The stage cost weights are $Q = I$ and $R = 1.5$. The terminal set is $\X_{T} = \{x : \|x\|^{2}_{P} \leqslant \gamma\}$, and terminal cost is $\ell_T(x)=\|x\|_P^2$, with  
\[
P = \Bigl[\begin{smallmatrix}
    15.54 & 0 \\ 0 & 147.70
\end{smallmatrix}\Bigr], \ \gamma = 2.29.
\]
Implementation details for this example 
%of Algorithms~\ref{alg:updates_fn_indices} and \ref{alg:mpc_algorithm} 
are provided in Appendix~A. 
The procedure for computing multi-step disturbance sets is given in Section~\ref{appendix:disturbance_sets}. The computation of the terminal set and feedback law is described in Section~\ref{appendix:terminal_set}. The methods for evaluating the infeasible index set $\I$ and computing the candidate perturbation set $\Vset_k$ are described in Section~\ref{appendix:feasibility_check}.
%and \ref{appendix:candidate_sequence}. 
Finally, a Sequential Convex Programming (SCP) approach for solving the RHOCP is discussed in Section~\ref{appendix:dc_solution}.

%To create a situation in which inconsistent multi-step predictors result in an infeasible problem, we 
We artificially inflate the disturbance sets for prediction time steps $i = 3$ and $i=4$ by a factor of two. This leads to sufficient inconsistency in predicted tubes at successive time steps 
%to create a situation where, without the use of the predictor substitution algorithm, the multi-step RHOCP becomes infeasible at  time steps $k = 1$ and $k=2$ despite being feasible at $k = 0$.
to make the multi-step RHOCP infeasible at $k = 1$ and $k=2$ despite being feasible at $k = 0$.
%
%With Algorithm~\ref{alg:updates_fn_indices}, the RHOCP~(\ref{eq:RHOCP_subs}) is feasible at $k=0$, but infeasible at $k=1$.
However, the predictor substitution procedure of Algorithm~\ref{alg:updates_fn_indices} constructs feasible problems using substitutions at $k=1$ and $k=2$. For all time steps $k\geqslant 3$, no further substitutions are required.
The controller is implemented using CVXPY~\citep{diamond2016cvxpy} and CLARABEL~\citep{goulart:2024}. Figure \ref{fig:simulations_no_shrinking} shows the resulting predicted and closed-loop trajectories.\\[-1ex]
%for an example where the 3- and 4-steps ahead disturbance sets are artificially enlarged.

\begin{figure}
    \centering
    \includegraphics[scale=0.7]{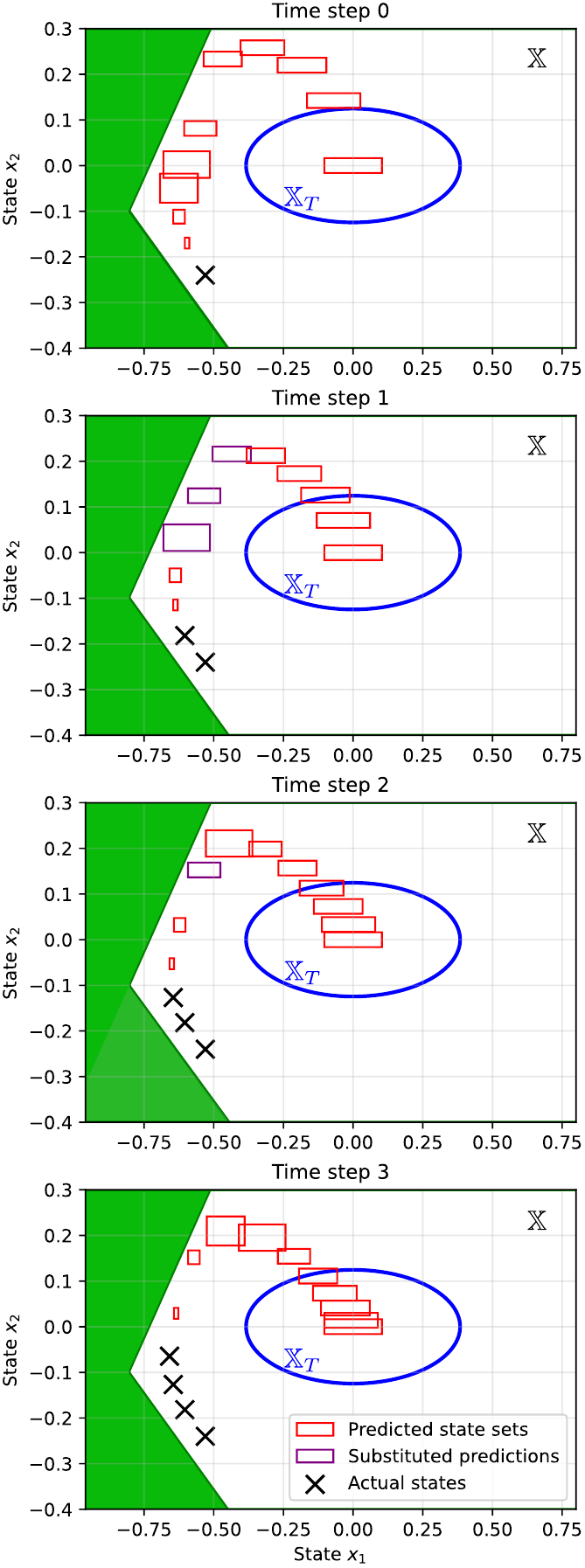}
    \caption{Predicted state tubes and closed-loop trajectories for several time steps, showing the predicted state sets generated using substituted predictors}
    \label{fig:simulations_no_shrinking}
\end{figure}

\section{Conclusions and future directions} \label{sec:conclusion}
This paper presents a method for guaranteeing recursive feasibility of robust multi-step MPC. By using predictor substitutions to avoid infeasibility of the RHOCP arising from inconsistencies in robust predictors at different time steps, the proposed method does not depend on a multi-rate formulation, and therefore allows the control sequence to be recomputed at each time step, given the latest state information. The algorithm is agnostic to the RHOCP solution method. A simple numerical example illustrates the approach. 
A potential limitation is that predictor substitution may cause suboptimality if the prediction horizon is reduced to maintain the format of one predictor per time step. Future work will relax this requirement by allowing arbitrary compositions of multi-step predictors, and will investigate alternative candidate control sequences for checking feasibility.
%some degree of conservatism, caused by the use of a simple line-search over candidate trajectories, which may result in unnecessary substitutions.

\vspace{0.5ex}
\bibliographystyle{agsm}
\bibliography{recursive_mpc_aut}

\vspace{0.5ex}
\appendix
\section{Details of the numerical example}
%\vspace{0ex}
\subsection{Robust multi-step disturbance sets}
\label{appendix:disturbance_sets}
The predictions of the model \eqref{eq:example_system} can be bounded using an incremental linear difference inclusion (LDI),
\begin{equation}\label{eq:LDI}
    g(x+e, u+Ke) - g(x,u) \in \mathrm{co}\{\Phi^{(1)}e, \dots, \Phi^{(m)}e\} ,
\end{equation}
for all $x,e\in\X$, $u\in\U$,
where $\Phi^{(j)} = \hat{A}^{(j)}+\hat{B}^{(j)}K$, $j=1,\ldots,m$, are the LDI vertices and $\mathrm{co}$ denotes the convex hull. 
For the system~(\ref{eq:example_system}), a suitable LDI can be defined with $m=2$ in terms of bounds on the derivative of $\phi(x) = [0 \ \ e^{\alpha x_2}-1]^\top$ for $x\in\X$.
%the maximum and minimum values of $x_{2}$ within $\X$. 
%
An outer bound on the $i$-step ahead robust disturbance set can then be computed using sums of single-step disturbance sets, i.e.~$\W_0 = \{0\}$, 
\begin{equation} \label{eq:disturbances}
\W_{i} = \D \oplus \mathrm{co}\{\Phi^{(1)}\W_{i-1} , \ldots, \Phi^{(m)}\W_{i-1} \} 
%\W_{i} = \max_{j\in \{1, \dots m\}} \bigoplus^{i-1}_{k = 0} (\Phi^{(j)})^{i -1 - k}\W_{1}
\end{equation} 
for $i = 1, \dots, N$.\\[-1ex]

\subsection{Robust terminal set and feedback law}
\label{appendix:terminal_set}
%We can locally stabilise the closed-loop system, and satisfy 
In Assumption~\ref{as:terminal_ingredients}, \eqref{eq:terminal_cost} holds if, for all $j$, we have
\[%\begin{equation}\label{eq:lyapunov_ldi}
    \|x\|^{2}_{P} - \|\Phi^{(j)}x + w\|^{2}_{P} \geqslant \|x\|^{2}_{Q + K^{T}RK} - \sigma^{2}
\]%\end{equation} 
for all $w \in \W_{1}=\mathrm{co}\{w^{(1)},\ldots,w^{(q)}\}$. Defining  $S = P^{-1}$ and $Y = KS$, this can be expressed in terms of the following linear matrix inequalities (LMIs): for all $j$ and~$i$,%
\begin{equation} \label{eq:schur_lmi}
    \left[\begin{smallmatrix}
        S & {0} & (\hat{A}^{(j)}S + \hat{B}^{(j)}Y)^\top & S & Y^\top \\
        {\star} & \sigma^{2} & w^{(i)\top} & {0} & {0} \\
        {\star} & {\star} & S & {0} & {0} \\
        {\star} & {\star} & {\star} & Q^{-1} &  {0} \\
        {\star} & {\star} & {\star} & {\star} & R^{-1}
    \end{smallmatrix}\right] \succeq {0}.
\end{equation}
Furthermore, to ensure that compositions of $g(\cdot )$ up to the prediction horizon remain convex, we must impose a further condition that the elements of $\Phi^{(j)}$ are nonnegative. This is achieved with the element-wise constraints:
\begin{equation}\label{eq:convexity_condition}
\hat{A}^{(j)}S + \hat{B}^{(j)}Y \geqslant {0} , 
\qquad 
S \succ 0, \ \mathrm{diagonal}.
\end{equation}
%and the constraint that $S$ is diagonal. 
To compute the terminal cost and feedback gain 
we solve the semidefinite program: $\mathrm{minimise}\ \sigma^{2}$
subject to \eqref{eq:schur_lmi}-\eqref{eq:convexity_condition}, and set
$P \gets S^{-1}$, $K \gets YP$.
%The terminal set is defined by the ellipsoidal set 
% \[
%     \X_{T} = \{ x : \|x\|^{2}_{P} \leqslant \gamma\}.
% \]
We compute the maximum terminal set scaling $\gamma$ satisfying $\X_T\subseteq \X$ and $K\X_T\subseteq \U$ and verify that
the RPI condition in (\ref{eq:multi-step_terminal}) holds since $\sigma^2$ is no greater than $\gamma$ multiplied by the smallest eigenvalue of $P^{-1/2}(Q + K^\top R K)P^{-1/2}$.\\[-1ex]
%
%if the resulting $\gamma$ satisfies $\gamma \geqslant \sigma^2/\sigma_{\min}(P^{-1/2}(Q + K^\top R K)P^{-1/2})$, where $\sigma_{\min}(\cdot)$ denotes the minimum singular value.
%
% \begin{equation}
%     \mathrm{maximise}\ \gamma
% \end{equation} subject to 
% \begin{subequations} \label{eq:gamma_constraints}
%     \begin{align}
%         \X_{T} \oplus \W_{1} &\subseteq \X \label{eq:example_state_constraint} \\
%         K\mathbb{X_{T}} \oplus \W_{1} &\subseteq \U \label{eq:example_input_constraint}
%     \end{align}
% \end{subequations} 
%where $\W_{i}$ is the steady-state effect the disturbance calculated according to \eqref{eq:disturbances}. 
%We note that this term only converges if the spectral radius of $\Phi^{(j)}$ for all $j = 1,\dots,m$.

\subsection{Infeasibility check and candidate control sequences}
\label{appendix:feasibility_check}
Since the model state in this example is two-dimensional, it is computationally
feasible to determine the infeasible set $\I$ in (\ref{eq:feasibility_check}) using the vertices of the disturbance sets $\smash{\W_1}, \ldots, \smash{\W_N}$.
% feasible to compute the feasibility checks for each vertex of the disturbance sets.
However, for higher-dimensional systems with polytopic disturbance sets  feasibility can be checked
% for higher-dimension systems, if the disturbances are represented as polytopes, it is possible to check this
via linear programming. Given a disturbance set $\W_i = \smash{\{w: H_{w}^{(i)} w \leqslant h_{w}^{(i)}\}}$, the condition that $\smash{\X_{i|k}} =\{\smash{ x^0_{i|k}}\}\oplus \smash{\W_i}\subseteq \X  = \smash{\{x : H x \leqslant \mathbf{1}\}}$ is equivalent to existence of a nonnegative matrix $\smash{\Lambda^{(i)}}\geqslant 0$ such that $\smash{\Lambda^{(i)}H_{w}^{(i)}} = H$ and $\smash{\Lambda^{(i)}h_{w}^{(i)}} \leqslant \mathbf{1} - \smash{ Hx^0_{i|k}}$.
%
%\subsection{Alternative } \label{appendix:candidate_sequence}

For this example, $\Vset_k$ is populated by $\smash{\tilde{v}_{0:N-1|k}}$, $\smash{ v^{\min}_{0:N-1|k}}$, and a set of linear interpolations between these two sequences, where $\smash{v^{\min}_{0:N-1|k}}$ is the sequence of perturbations which would result in $u_{i|k} = \min_{\U} u$ for all $i$ under the feedback law applied to a nominal state trajectory generated by the shifted sequence $\tilde{v}_{0:N-1|k}$.\\[-1ex]

\subsection{Solution of the multi-step RHOCP} \label{appendix:dc_solution}
To solve the RHOCP~(\ref{eq:RHOCP_subs}), we exploit the convexity properties of $g$, which allow efficient convex RHOCP approximations.
%, in this case with convex exponential cone and second order cone constraints.
%
Following \citet{DoffSotta:2022}, the predictors are defined by elementwise bounds,
\begin{equation}\label{eq:tube_DC}
\X_{i|k} = \{x : \underline{x}_{i|k} \leqslant x - x_{i|k}^0 \leqslant \overline{x}_{i|k}\} ,
\end{equation}
where $\smash{\underline{x}_{i|k}}$, $\smash{\overline{x}_{i|k}}$ are optimisation variables. Using the convex-concave procedure (CCP), at
each step $k$ we linearise around the nominal predicted state sequence $x^0_{0:N|k}$ defined by $x_{0|k} = x_k$ and, for $i=1,\ldots,N$
\[
x^0_{i|k} = \bar{f}_i(x_k, v^0_{0:i-1|k}),
\]
with $\smash{v^0_{0:N-1|k}=\tilde{v}_{0:N-1|k}}$. 
%It is possible to find tight bounds on the linearisation errors 
For this example, the CCP is guaranteed to converge 
since a convex function is lower-bounded by its Jacobian linearisation about any point in its domain. Let $\smash{v^\delta_{i|k}} = \smash{ v_{i|k} - v^0_{i|k}}$, then elementwise bounds at each prediction time step are given by
%\begin{equation} \label{eq:perturbation_bounds}
\begin{align*}
\overline{x}_{i|k} &\geqslant \bar{f}_{s}(x_{k-p}, \{v_{k-p:k-1},v_{0:i-1|k}\}) + \max_{w\in \W_{s}} w - x_{i|k}^0 
%\label{eq:upper_bound} 
\\
\underline{x}_{i|k} &\leqslant \bar{f}_{s}^\prime (x_{k-p}, \{v_{k-p:k-1},v^0_{0:i-1|k}\}) v^\delta_{0:i-1|k} + \smash{\min_{w\in \W_{s}} w}
%\label{eq:lower_bound}
\end{align*}    
%\end{equation}
for $i = 1, \dots, \smash{\bar{N}_{k}}$, where $s = \smash{ s_{i|k}}$, $p = \smash{ s_{i|k}} - i$
and $\smash{\bar{f}_{s}^{\prime}}$ is the Jacobian matrix of $\bar{f}_s$ with respect to the control sequence. 
%The sets in \eqref{eq:substituted_tube} are approximated by
% \begin{equation} \label{eq:tube_DC}
%     \X^{(s, DC)}_{i|k} = \{x^{nom}_{k+i}\} \oplus \mathbb{S}_{i|k}
% \end{equation} 
% where the sets $\mathbb{S}_{i|k} = \{s : \underline{s} \leqslant s_{i|k} \leqslant \overline{s}_{i|k}\}$ is a hyper-rectangle defined by the element-wise bounds in \eqref{eq:perturbation_bounds}. 
We replace the sets \eqref{eq:substituted_tube} in
% the cost \eqref{eq:new_mpc_cost} and constraints
\eqref{eq:RHOCP_subs} with \eqref{eq:tube_DC}. The RHOCP is then solved as a Sequential Convex Program, with the nominal trajectories updated at each iteration based on the previously solved convex problem.
\end{document}